\newtheorem{main}{Theorem}
\newtheorem{theorem}{Theorem}[section]
\newtheorem{lem}[theorem]{Lemma}
\newtheorem{prop}[theorem]{Proposition}
\newtheorem{cor}[theorem]{Corollary}
\theoremstyle{definition}
\newtheorem{definition}[theorem]{Definition}
\newtheorem{notation}[theorem]{Notation}
\newtheorem{Remark}[theorem]{Remark}
\def\ca{\curvearrowright}
\def\ra{\rightarrow}
\def\la{\lambda}
\def\La{\Lambda}
\def\bb{\mathbb}
\def\bb{\mathbb}
\def\g{\gamma}
\def\G{\Gamma}
\def\Cal{\mathcal}
\numberwithin{equation}{section}
\def\mod{\rm Mod}
\begin{document}

\title[$W^*$-superrigidity for braid group actions]{$W^*$-superrigidity for arbitrary actions  of central quotients of braid groups}
\author[I. Chifan]{Ionut Chifan}
\address{Department of Mathematics, University of Iowa, 14 MacLean Hall, IA
52242, USA}
\email{ionut-chifan@uiowa.edu}
\thanks{I.C.\ was supported in part by NSF Grant \#1301370. }
\thanks{A.I.\ was supported in part by  NSF  Grant DMS \#1161047 and NSF Career Grant DMS \#1253402.}
\author[A. Ioana]{Adrian Ioana}
\address{Department of Mathematics, University of California San Diego,
La Jolla, CA 92093, USA}
\email{aioana@math.ucsd.edu}
\author[Y. Kida]{Yoshikata Kida}
\address{Department of Mathematics, Kyoto University, 606-8502 Kyoto, Japan}
\email{kida@math.kyoto-u.ac.jp}
\dedicatory{}
\keywords{}

\begin{abstract}
For any $n\geqslant 4$ let $\tilde B_n=B_n/Z(B_n)$  be the quotient of the braid group $B_n$  through its center. We prove that  any free ergodic probability measure preserving (pmp) action $\tilde B_n\curvearrowright (X,\mu)$ is W$^*$-superrigid in the following sense: if $L^{\infty}(X)\rtimes\tilde B_n\cong L^{\infty}(Y)\rtimes\Lambda$, for an arbitrary free ergodic pmp action $\Lambda\curvearrowright (Y,\nu)$, then the actions $\tilde B_n\curvearrowright X,\Lambda\curvearrowright Y$ are stably (or, virtually) conjugate.  Moreover, we prove that the same  holds if $\tilde B_n$ is replaced with a finite index subgroup of the direct product 
$\tilde B_{n_1}\times\cdots\times\tilde B_{n_k}$, for some $n_1,\ldots,n_k\geqslant 4$. The proof uses the dichotomy theorem for normalizers inside crossed products by free groups from \cite{PV11} in
 combination with the OE superrigidity theorem for actions of mapping class groups from \cite{Ki06}. Similar techniques allow us to prove that if a group $\Gamma$  is hyperbolic relative to a finite family of proper, finitely generated, residually finite, infinite subgroups, then the II$_1$ factor $L^{\infty}(X)\rtimes\Gamma$ has a unique Cartan subalgebra, up to unitary conjugacy, for any free ergodic pmp action $\Gamma\curvearrowright (X,\mu)$.
\end{abstract}

\maketitle

\section{Introduction}

\subsection{Background} Every  measure preserving  action $\Gamma\curvearrowright (X,\mu)$ of a countable group $\Gamma$ on a standard probability space $(X,\mu)$  gives rise to a tracial von Neumann algebra $M:=L^{\infty}(X)\rtimes\Gamma$, via the classical {\it group measure space construction} of Murray and von Neumann \cite{MvN36}. If the action $\Gamma\curvearrowright (X,\mu)$ is free and ergodic, then $M$ is a II$_1$ factor and $A:=L^{\infty}(X)$ is a {\it Cartan subalgebra}, i.e. a maximal abelian $*$-subalgebra whose {\it normalizer}, $\mathcal N_{M}(A)=\{u\in\mathcal U(M)|uAu^*=A\}$, generates a  dense subalgebra of $M$.

 Since the early 2000s, Popa's deformation/rigidity theory has generated spectacular progress in the classification of group measure space II$_1$ factors 
 (see the surveys \cite{Po06,Va10a,Io12b}). A major achievement of this theory is the recent discovery of the first families of actions that can be entirely reconstructed from their group measure space II$_1$ factors. 
To be more precise,  recall that a free ergodic pmp action $\Gamma\curvearrowright (X,\mu)$ is said to be   {\it W$^*$-superrigid} if any free ergodic pmp action $\Lambda\curvearrowright (Y,\nu)$ which gives rise to an isomorphic II$_1$ factor, $L^{\infty}(X)\rtimes\Gamma\cong L^{\infty}(Y)\rtimes\Lambda$, is {\it conjugate} (or, more generally, {\it stably conjugate}) to $\Gamma\curvearrowright (X,\mu)$.
 
 In \cite{Pe09}, J. Peterson proved the existence of W$^*$-superrigid actions. Shortly after, S. Popa and S. Vaes obtained the first concrete families of W$^*$-superrigid actions \cite{PV09}. For instance, they proved that for any group $\Gamma$ belonging to a large class of amalgamated free product groups,  the Bernoulli action $\Gamma\curvearrowright (X_0,\mu_0)^{\Gamma}$ is W$^*$-superrigid. The second named author then showed that the Bernoulli action $\Gamma\curvearrowright (X_0,\mu_0)^{\Gamma}$ is W$^*$-superrigid, for any icc property (T) group $\Gamma$ \cite{Io10}. Subsequently, several large families of W$^*$-superrigid actions were found in \cite{IPV10,CP10,HPV10,Va10b,CS11,PV11, Bo12}. 

 In particular, C. Houdayer, S. Popa and S. Vaes exhibited the first and, thus far, only  example of a group $\Gamma$ whose {\it every} free ergodic pmp action is W$^*$-superrigid \cite{HPV10}.
More precisely, let  $\Gamma=SL_3(\mathbb Z)*_{\Sigma}SL_3(\mathbb Z)$, where $\Sigma<SL_3(\mathbb Z)$ is the subgroup consisting of matrices $g=(g_{ij})$ with $g_{31}=g_{32}=0$. It was then shown in \cite{HPV10} that if  $\Gamma\curvearrowright (X,\mu)$ is an arbitrary free ergodic pmp action, then the II$_1$ factor $L^{\infty}(X)\rtimes\Gamma$ has a unique group measure space Cartan subalgebra, up to unitary conjugacy.  In combination with the third named author's work \cite{Ki09} showing that the action $\Gamma\curvearrowright (X,\mu)$ is {\it OE superrigid}, this implies that the action $\Gamma\curvearrowright (X,\mu)$ is W$^*$-superrigid.

\subsection{Statement of main results} The goal of this paper is to provide new and more natural examples of groups $\Gamma$ whose every action is W$^*$-superrigid.
To state our main results, let  $P_n<B_n$ be the pure braid and braid groups on $n$ strands. Denote by $Z$ the common center of $B_n$ and $P_n$, and define the central quotients $\tilde B_n=B_n/Z$ and $\tilde P_n=P_n/Z$. We show that if $n\geqslant 4$, then every free ergodic pmp action of $\tilde B_n$ and $\tilde P_n$ is W$^*$-superrigid. More generally, we have:

\begin{main}\label{A} 
Let  $\Gamma_1,\ldots,\Gamma_k$ be groups belonging to the family $\{\tilde B_n|n\geqslant 4\}$, for some $k\geqslant 1$. Let $\Gamma<\Gamma_1\times\cdots\times\Gamma_k$ be a finite index subgroup
and let $\Gamma\curvearrowright (X,\mu)$ be a free ergodic pmp action.
 Let $\Lambda\curvearrowright (Y,\nu)$ be an arbitrary free ergodic pmp action.

If $L^{\infty}(X)\rtimes\Gamma\cong L^{\infty}(Y)\rtimes\Lambda$, then  the actions $\Gamma\curvearrowright X$, $\Lambda\curvearrowright Y$ are stably conjugate. 
Moreover, if the action $\Gamma\curvearrowright X$ is aperiodic, then  the actions $\Gamma\curvearrowright X$, $\Lambda\curvearrowright Y$ are conjugate:   there exist  an isomorphism of probability spaces  $\theta:(X,\mu)\rightarrow (Y,\nu)$ and an isomorphism of groups $\delta:\Gamma\rightarrow\Lambda$  such that $\theta(\g\cdot x)=\delta(\g)\cdot\theta(x)$, for all $\g\in\Gamma$ and almost every $x\in X$.
\end{main}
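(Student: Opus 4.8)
\emph{Proof proposal.} The strategy is to factor $W^*$-superrigidity into two independent rigidity phenomena, exactly as in the scheme of \cite{HPV10}: first, a \emph{unique Cartan subalgebra} statement, namely that $A:=L^{\infty}(X)$ is, up to unitary conjugacy, the only group measure space Cartan subalgebra of $M:=L^{\infty}(X)\rtimes\Gamma$; and second, the \emph{OE superrigidity} of the action $\Gamma\curvearrowright X$, which is furnished by \cite{Ki06} once one knows that $\Gamma$ is commensurable to a product of mapping class groups of multiply-punctured spheres (the hypothesis $n\geqslant 4$ guarantees that each such surface lies outside the small exceptional range and is therefore rigid). Granting both, the conclusion is immediate: an isomorphism $L^{\infty}(X)\rtimes\Gamma\cong L^{\infty}(Y)\rtimes\Lambda$ equips $M$ with two group measure space Cartan subalgebras, which the uniqueness statement forces to be unitarily conjugate; this is precisely the assertion that $\Gamma\curvearrowright X$ and $\Lambda\curvearrowright Y$ are stably orbit equivalent. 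OE superrigidity then promotes the stable orbit equivalence to a stable conjugacy and produces the group isomorphism $\delta\colon\Gamma\to\Lambda$. Under the aperiodicity hypothesis one may remove the amplification and pass from stable conjugacy to honest conjugacy, yielding $\theta(\g\cdot x)=\delta(\g)\cdot\theta(x)$ by a routine ergodic-theoretic argument.

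All the difficulty thus lies in the unique Cartan subalgebra statement, and this is where the free subgroups concealed in braid groups become decisive. I would first reduce to the pure braid quotient: since $\tilde P_n$ has finite index in $\tilde B_n$, and since both the passage to finite-index subgroups and the passage to direct products are compatible with uniqueness of Cartan subalgebras (via standard commensurability and tensor-splitting arguments), it suffices to treat $M=L^{\infty}(X)\rtimes\tilde P_n$. The structural input is the Fadell--Neuwirth fibration, which realizes $P_n$ as an extension of $P_{n-1}$ by a free group $F_{n-1}$, the kernel of forgetting one strand. Because the full twist generating the center $Z$ projects to the nontrivial full twist of $P_{n-1}$, one has $Z\cap F_{n-1}=\{1\}$, so this free subgroup descends isomorphically to a \emph{normal free subgroup} $N\cong F_{n-1}\triangleleft\tilde P_n$ with quotient $\tilde P_{n-1}$; iterating, the tower $\tilde P_n\twoheadrightarrow\tilde P_{n-1}\twoheadrightarrow\cdots$ terminates at $\tilde P_3\cong F_2$, a free group.

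With $N$ in hand, the mechanism is to run the Popa--Vaes dichotomy from \cite{PV11} along the free layer. Let $B\subset M$ be a second group measure space Cartan subalgebra, so that $B$ is amenable relative to $A$ and $\mathcal N_M(B)''=M$. The dichotomy for normalizers inside the crossed product by the free group $N$ yields the alternative: either $B\prec_M A$, or the normalizer $\mathcal N_M(B)''=M$ is amenable relative to $A$ inside $M$. The second alternative would force $\tilde P_n$ to be amenable, which is absurd; hence $B\prec_M A$ in the sense of Popa's intertwining-by-bimodules. Since $B$ and $A$ are both Cartan, in particular regular and maximal abelian, this one-sided intertwining can be upgraded, by a now-standard argument, to a unitary $u\in M$ with $uBu^*=A$, which is the desired uniqueness.

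The delicate point, and the one I expect to be the main obstacle, is that $\tilde P_n$ is itself \emph{not} free, so the free-group dichotomy of \cite{PV11} does not apply to $M=L^{\infty}(X)\rtimes\tilde P_n$ on the nose: it is available only for the normal free layer $N$, whereas the quotient $\tilde P_{n-1}$ persists as an outer (cocycle) crossed product over the intermediate algebra $L^{\infty}(X)\rtimes N$. Making the dichotomy effective therefore requires a relative version that tracks amenability with respect to $L^{\infty}(X)\rtimes N$, together with an induction on the number of strands that peels off one free layer at a time and feeds the intertwining obtained at stage $n$ into stage $n-1$ without destroying regularity. Controlling this interplay between relative amenability, the normality of $N$, and the residual quotient action, uniformly enough to close the induction at the base case $\tilde P_3\cong F_2$, is the technical heart of the argument.
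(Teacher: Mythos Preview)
Your overall architecture---unique Cartan subalgebra plus Kida's OE superrigidity, with $\tilde B_n$ realized as a finite-index subgroup of the mapping class group of the $(n+1)$-punctured sphere---matches the paper exactly, as does your identification of the iterated free-kernel tower $\tilde P_n\twoheadrightarrow\tilde P_{n-1}\twoheadrightarrow\cdots\twoheadrightarrow\tilde P_3\cong\mathbb F_2$ coming from Fadell--Neuwirth.

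The genuine gap is in how you invoke the Popa--Vaes dichotomy. Your third paragraph applies it ``inside the crossed product by the free group $N$'' to conclude $B\prec_M L^\infty(X)$ or $M$ amenable relative to $L^\infty(X)$; but $M$ admits no such crossed-product decomposition. The kernel $N$ is the \emph{normal} subgroup, not the acting group, and writing $M=(L^\infty(X)\rtimes N)\rtimes\tilde P_{n-1}$ puts the non-free quotient in the acting position, where Theorem~\ref{pv} does not apply. You correctly flag this as the main obstacle, but the fix you sketch---a relative version tracking amenability with respect to $L^\infty(X)\rtimes N$---points in the wrong direction. The paper instead exploits the \emph{quotient} via a comultiplication: given the composite surjection $\delta:\Gamma\to\Lambda$ onto the free base $\Lambda=\tilde P_3\cong\mathbb F_2$, one defines the trace-preserving embedding $\theta:M\hookrightarrow M\bar\otimes L\Lambda$ by $bu_\gamma\mapsto bu_\gamma\otimes v_{\delta(\gamma)}$ and applies the dichotomy to $M\bar\otimes L\Lambda=M\rtimes\Lambda$, now an honest crossed product by a free group. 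The alternative $\theta(A)\prec M\otimes 1$ then unwinds (Proposition~\ref{prop1}) to $A\prec_M L^\infty(X)\rtimes\ker\delta$, while the amenability alternative forces $\Lambda$ amenable (Proposition~\ref{prop2}), and one recurses on $\ker\delta$, which lies in the smaller class by Lemma~\ref{quot}. Two further technical points your outline does not anticipate: the induction must be formulated for masas whose normalizer has merely \emph{finite index} in $pMp$, not only for Cartan subalgebras; and a transfer lemma (Proposition~\ref{masa}) is required to push such a masa from $M$ down into $L^\infty(X)\rtimes\ker\delta$ while retaining the finite-index hypothesis on the normalizer. Without this strengthening the recursion does not close.
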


For the notion of {\it stable conjugacy} (also called {\it virtual conjugacy}) of pmp actions, see Definition \ref{conj}.
An ergodic pmp action $\Gamma\curvearrowright (X,\mu)$ is called {\it aperiodic} if its restriction to any finite index subgroup $\Gamma_0<\Gamma$ is ergodic.

Note that since $P_n<B_n$  and hence $\tilde P_n<\tilde B_n$ has finite index, Theorem \ref{A} indeed applies to arbitrary actions of $\tilde B_n$ and $\tilde P_n$, for every $n\geqslant 4$.
\vskip 0.05in
In order to prove Theorem \ref{A}, we first show that the II$_1$ factor $L^{\infty}(X)\rtimes\Gamma$ has a unique Cartan subalgebra, up to unitary conjugacy.

\begin{main}\label{B}
\label{cartan}
Let  $\Gamma_1,\ldots,\Gamma_k$ be groups belonging to the family $\{\tilde B_n|n\geqslant 3\}$, for some $k\geqslant 1$. Let $\Gamma<\Gamma_1\times\cdots\times\Gamma_k$ be a finite index subgroup
and let $\Gamma\curvearrowright (X,\mu)$ be a free ergodic pmp action.
 Denote $M=L^{\infty}(X)\rtimes\Gamma$.

 If $A\subset M$ is a Cartan subalgebra, then there is a unitary $u\in M$ such that $A=uL^{\infty}(X)u^*$. 
\end{main}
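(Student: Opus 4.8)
The plan is to set $B:=L^{\infty}(X)$ and $M:=B\rtimes\Gamma$, take an arbitrary Cartan subalgebra $A\subseteq M$, and prove that $A\prec_M B$ in the sense of Popa's intertwining-by-bimodules; once this is in hand, the desired conclusion $A=uBu^{*}$ follows from the standard fact that two Cartan subalgebras, one of which intertwines into the other, are unitarily conjugate. The first reduction is to replace $\Gamma$ by the full product $\Gamma_1\times\cdots\times\Gamma_k$: uniqueness of the Cartan decomposition is stable under passing between commensurable groups, so it suffices to treat the product and, after intersecting with the pure subgroups, a finite-index subgroup of it. Throughout, the only feature of $A$ that I would use is that, being abelian, it is amenable and hence amenable relative to $B$.

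Next I would isolate a free normal subgroup to feed into the Popa--Vaes machinery. If every $n_i=3$, then $\Gamma$ is commensurable to a direct product of copies of $\tilde B_3\cong\mathrm{PSL}_2(\mathbb{Z})$, i.e.\ a product of (virtually) free groups, and the conclusion is immediate from the free/hyperbolic case of \cite{PV11}. Otherwise some $n_i\geqslant 4$, and here I would use the identification of $\tilde B_{n_i}$ (up to finite index) with the mapping class group of a punctured sphere together with the Birman exact sequence: the point-pushing subgroup of that factor supplies a normal \emph{free} subgroup $\Sigma\triangleleft\Gamma$ whose quotient $Q:=\Gamma/\Sigma$ contains $\tilde B_{n_i-1}$ with $n_i-1\geqslant 3$ and is therefore non-amenable. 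This is exactly where the hypothesis $n\geqslant 3$ enters. Writing $\mathcal N:=B\rtimes\Sigma$, one has $M=\mathcal N\rtimes Q$, and crucially $\mathcal N$ is a crossed product by a free group.

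The dichotomy step is then the heart of the matter. I would apply the relative form of the Popa--Vaes normalizer dichotomy of \cite{PV11} with respect to $\Sigma$: since $A$ is amenable relative to $B$, either $A\prec_M\mathcal N$, or $\mathcal N_M(A)''$ is amenable relative to $\mathcal N$. In the second case, because $A$ is Cartan its normalizer generates all of $M$, so $M=\mathcal N\rtimes Q$ would be amenable relative to $\mathcal N$; by the standard computation of relative amenability over the base of a crossed product this forces $Q$ to be amenable, contradicting the choice of $\Sigma$. Hence $A\prec_M\mathcal N=B\rtimes\Sigma$. Since $\Sigma$ is free, $\mathcal N$ has a unique Cartan decomposition by \cite{PV11}, and combining this with the regularity of $A$ in $M$ should upgrade the intertwining to $A\prec_M B$, completing the argument.

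The hard part will be this final upgrade together with securing the precise relative statement. First, the dichotomy of \cite{PV11} must be invoked relative to the normal free subgroup $\Sigma$ rather than for the ambient group $\Gamma$, which is neither hyperbolic nor, a priori, weakly amenable; making the free subgroup structure (as opposed to any global property of $\tilde B_n$) the decisive ingredient is itself part of the work, as is checking that abelianness of $A$ genuinely supplies amenability relative to $B$. Second, passing from $A\prec_M B\rtimes\Sigma$ to $A\prec_M B$ is not formal: it requires the intertwining-and-normalizer calculus to produce, from the partial intertwiner, a corner of $A$ lying inside $\mathcal N$ whose normalizer within $\mathcal N$ is large enough to run the free-group uniqueness theorem. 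I expect this bootstrapping to be the main technical obstacle, with the group-theoretic construction of $\Sigma$ and the finite-index reductions being comparatively routine.
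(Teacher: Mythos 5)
Your overall frame (set $B=L^{\infty}(X)$, prove $A\prec_{M}B$, then conclude by \cite[Theorem A.1]{Po01}) and your finite-index/commensurability reductions match the paper, but the step you yourself call the heart of the matter is where the proposal genuinely breaks. You decompose $M=(B\rtimes\Sigma)\rtimes Q$ with $\Sigma$ the free point-pushing kernel and $Q=\Gamma/\Sigma$, and invoke the dichotomy of \cite{PV11} ``with respect to $\Sigma$'' to conclude that either $A\prec_{M}B\rtimes\Sigma$ or $\mathcal N_{M}(A)''$ is amenable relative to $B\rtimes\Sigma$. However, the hypotheses of that dichotomy (Theorem \ref{pv}: weak amenability plus a proper $1$-cocycle into a representation weakly contained in the regular one) are hypotheses on the group \emph{acting} on the base, i.e.\ on $Q$, not on the base $B\rtimes\Sigma$; the freeness of the kernel contributes nothing to its applicability. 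Your $Q$ is, up to finite index, $\tilde P_{n_i-1}\times\prod_{j\neq i}\tilde B_{n_j}$: not free, not hyperbolic, not known to be weakly amenable, and, when it is a nontrivial product of infinite groups, outside every class for which such a dichotomy is known. Worse, a dichotomy whose only hypotheses are ``$\Sigma$ is a free normal subgroup and $A$ is amenable relative to $B$'' is false in general: applied to $\Gamma=\mathbb F_2\times G$ with $\Sigma=\mathbb F_2\times\{e\}$ and $G$ non-amenable, it would force every Cartan subalgebra of $B\rtimes\Gamma$ to intertwine into $B\rtimes\mathbb F_2$, which fails for product actions as soon as $G$ admits a free ergodic action whose crossed product has a Cartan subalgebra not intertwining into the underlying $L^{\infty}$-algebra (such $G$ exist by the well-known examples of group measure space factors with non-conjugate Cartan subalgebras). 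A smaller unverified point: writing $M=(B\rtimes\Sigma)\rtimes Q$ presumes the Birman sequence for the \emph{central quotients} splits.

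The paper's proof turns your picture around: the free group must appear as the \emph{quotient}, not the kernel. Composing the strand-forgetting maps yields a surjection $\delta:\Gamma'\rightarrow\Lambda$, with $\Gamma'$ commensurable to $\Gamma$, onto a free group $\Lambda$ (at the bottom of the chain, $\tilde P_3\cong\mathbb F_2$), whose kernel $\Sigma$ is the complicated group, lying one step lower in the class $Quot_n(\mathcal C_{rss})$ (Corollary \ref{surjections}, Lemmas \ref{quot} and \ref{prod}). Since such an extension need not split, the dichotomy is not applied inside $M$ at all: one embeds $\theta:M\rightarrow M\bar{\otimes}L\Lambda$ by $\theta(bu_{\gamma})=bu_{\gamma}\otimes v_{\delta(\gamma)}$, identifies $M\bar{\otimes}L\Lambda=M\rtimes\Lambda$ with $\Lambda$ acting trivially, and applies Theorem \ref{pv} there, where the acting group genuinely lies in $\mathcal C_{rss}$; the alternatives are then pulled back through $\theta$, with $\theta(A)\prec M\otimes 1$ giving $A\prec_{M}B\rtimes\Sigma$ (Proposition \ref{prop1}) and relative amenability of the image of the normalizer forcing $\Lambda$ amenable (Proposition \ref{prop2}, using Lemma \ref{ramen}), a contradiction. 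Finally, the ``upgrade'' you flag as the main obstacle is real, and it is not resolved by quoting uniqueness of Cartan subalgebras in $B\rtimes\Sigma$: the corner of $A$ that lands there is not Cartan in a corner of $B\rtimes\Sigma$. Instead the paper strengthens the induction statement: Theorem \ref{Cartan} is proved for masas whose normalizer has finite Pimsner--Popa index in the ambient corner, and Proposition \ref{masa} (with Lemma \ref{findex}) transports exactly this finite-index property through the intertwining, so the induction hypothesis applies to $\Sigma$ and the recursion closes. Without both the reversal of roles and this strengthened statement, the argument you outline cannot be completed.
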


For a discussion of how Theorem \ref{B} is proven and how it implies Theorem \ref{A}, see the next subsection.

For now, recall that a group $\Gamma$ is said to be $\mathcal C$-rigid ({\it Cartan}-rigid) in the sense of \cite{PV11} if the II$_1$ factor $L^{\infty}(X)\rtimes\Gamma$ has a unique Cartan subalgebra, up to unitary conjugacy, for any free ergodic pmp action $\Gamma\curvearrowright (X,\mu)$.
Theorem \ref{B} provides new examples of $\mathcal C$-rigid groups, adding to the classes already discovered in \cite{PV11,pv2,Io12a}.

In fact, a strategy similar to the one used in the proof of Theorem \ref{B} enabled us to find a new natural class of $\mathcal C$-rigid groups. 

\begin{main}\label{C}
\label{cartan}
Let  $\Gamma$ be a group  which is hyperbolic relative to a finite family of proper, finitely generated, residually finite, infinite subgroups. Let $\Gamma\curvearrowright (X,\mu)$ be a free ergodic pmp action and denote $M=L^{\infty}(X)\rtimes\Gamma$.

If $A\subset M$ is a Cartan subalgebra, then there is a unitary $u\in M$ such that $A=uL^{\infty}(X)u^*$. 
\end{main}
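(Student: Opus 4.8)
\emph{Proof strategy.} The plan is to show that any Cartan subalgebra $A\subset M$ must intertwine into $B\df L^{\infty}(X)$ in the sense of Popa, i.e. $A\prec_M B$, and then to upgrade this one-sided intertwining to a genuine unitary conjugacy. Write $\mathcal G=\{H_1,\dots,H_m\}$ for the peripheral family, so that $\Gamma$ is hyperbolic relative to $\mathcal G$, and set $M_i\df B\rtimes H_i$. The engine of the argument is a Popa--Vaes type dichotomy in the relative setting, obtained by combining the bi-exactness of $\Gamma$ relative to $\mathcal G$ with the approximation property that the peripheral hypotheses supply: for \emph{any} von Neumann subalgebra $A\subset M$, either $A\prec_M M_i$ for some $i$, or the normalizer $\mathcal N_M(A)''$ is amenable relative to $B$ inside $M$. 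The four standing hypotheses on $\mathcal G$ are used precisely here and in the descent below: finite generation and residual finiteness are what allow one to place $\Gamma$ in the class of groups to which the relative dichotomy applies, while properness and infiniteness guarantee that each $H_i$ is an almost malnormal subgroup of infinite index in $\Gamma$.

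First I would apply this dichotomy to a Cartan subalgebra $A\subset M$. Since $A$ is Cartan, its normalizer generates $M$, so $\mathcal N_M(A)''=M$. A relatively hyperbolic group with proper infinite peripheral subgroups is nonamenable, so $\Gamma$ is nonamenable and hence the factor $M=B\rtimes\Gamma$ is not amenable relative to $B$. This rules out the second alternative of the dichotomy and therefore forces $A\prec_M M_i=B\rtimes H_i$ for some $i\in\{1,\dots,m\}$.

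The conceptual crux is the descent from $A\prec_M B\rtimes H_i$ to $A\prec_M B$. Here I would use that the family $\mathcal G$ is almost malnormal, i.e. $H_i\cap gH_jg^{-1}$ is finite whenever $(i,g)\neq(j,e)$. By the standard control of normalizers in group measure space algebras, almost malnormality yields the following alternative for the regular subalgebra $A$: either $A\prec_M B$, or $\mathcal N_M(A)''\prec_M B\rtimes H_i$. The second possibility reads $M\prec_M B\rtimes H_i$, which is impossible because $H_i$ has infinite index in $\Gamma$. Hence $A\prec_M B$. Finally, since $A$ and $B=L^{\infty}(X)$ are both Cartan subalgebras of $M$ with $A\prec_M B$, Popa's theorem on the unitary conjugacy of intertwining Cartan subalgebras produces a unitary $u\in M$ with $uAu^*=B$, which is the desired conclusion.

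The step I expect to be the main obstacle is establishing the relative dichotomy itself, that is, carrying the Popa--Vaes analysis of \cite{PV11} from the free group setting over to groups hyperbolic relative to $\mathcal G$. This requires extracting the relevant bi-exactness and weak approximation data from the relative hyperbolicity of $\Gamma$, which is exactly where finite generation and residual finiteness of the peripheral subgroups are invoked; handling the finite intersections $H_i\cap gH_jg^{-1}$ uniformly across the whole family $\mathcal G$, rather than for a single malnormal subgroup, is the delicate point. Once the relative dichotomy is in hand, the application to the Cartan subalgebra, the malnormality descent, and the final conjugacy are comparatively formal.
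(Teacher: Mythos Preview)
Your proposal rests on a ``relative Popa--Vaes dichotomy'' that is not available under the stated hypotheses. The dichotomy in \cite{PV11} (and its hyperbolic-group extension in \cite{pv2}) requires weak amenability of $\Gamma$ together with a proper $1$-cocycle into a representation weakly contained in the regular one; neither follows from relative hyperbolicity when the peripheral subgroups are merely finitely generated, residually finite and infinite. Relative bi-exactness of $\Gamma$ with respect to $\mathcal G$ is indeed known, but by itself it does not yield the dichotomy you state; at best one would expect the second alternative to read ``$\mathcal N_M(A)''$ is amenable relative to some $M_i=B\rtimes H_i$'', not ``relative to $B$'', and even that would still need a completely bounded approximation property for $\Gamma$ that you have not produced. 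Your sentence ``finite generation and residual finiteness are what allow one to place $\Gamma$ in the class of groups to which the relative dichotomy applies'' is therefore the unjustified step.

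The paper's proof takes a completely different route, and it is instructive to see where the peripheral hypotheses are actually used. Residual finiteness of the $H_i$ is consumed not by any approximation argument but by Osin's \emph{peripheral filling} theorem \cite{Os05,DGO11}: one chooses deep finite-index normal subgroups $N_i\lhd H_i$ so that the quotient $\Gamma\to K=\Gamma/\langle\!\langle \cup_i N_i\rangle\!\rangle$ is a non-elementary hyperbolic group, while the kernel is a nontrivial free product $L_1*L_2$. One then applies the established dichotomy of \cite{pv2} to the \emph{hyperbolic} quotient $K$ via the comultiplication $\theta:M\to M\bar\otimes L(K)$, which forces $A\prec_M B\rtimes\ker(\delta)$. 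The descent to $B$ is handled not by malnormality but by the Ioana--Vaes structure theorem for normalizers in amalgamated free products \cite{Io12a,Va13}, applied to $B\rtimes(L_1*L_2)$. Thus both halves of your outline---the initial dichotomy and the malnormality descent---are replaced in the paper by arguments that route through a hyperbolic quotient and a free-product kernel, precisely to avoid needing weak amenability of $\Gamma$ itself.
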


Here, we use the notion of relative hyperbolicity from \cite[Definition 1.6]{Os04}.

S. Popa and S. Vaes proved any non-elementary hyperbolic group is $\mathcal C$-rigid \cite{pv2}. In other words, any group that is relatively hyperbolic to the trivial subgroup (equivalently, to a family of finite subgroups), is $\mathcal C$-rigid.
In view of this, it is natural to wonder whether arbitrary relatively hyperbolic groups are $\mathcal C$-rigid.
Theorem \ref{C} provides strong supporting evidence towards this conjecture. 

The proof of Theorem \ref{C} relies on a combination of results from geometry group theory and deformation/rigidity theory. 
Thus, we first use powerful results  of D. Osin, F. Dahmani,  and V. Guirardel  on the structure of relatively hyperbolic groups \cite{Os05, DGO11}. These results are then combined with recent work of S. Popa, S. Vaes, and the second named author on the structure of normalizers inside crossed products by hyperbolic groups \cite{pv2} and by free product groups \cite{Io12a, Va13}.

 Theorem \ref{C} covers large families of groups extensively studied in various areas of mathematics such as topology, geometric group theory, or model theory. For instance, Theorem \ref{C} applies to the following classes of groups: 1) the fundamental group of any complete, finite-volume Riemannian manifold with pinched negative sectional curvature, \cite{Bow99,Fa99}; 2) any limit group, in the sense of Sela, over torsion free, relative hyperbolic groups with free abelian parabolics \cite{KM99,Se00}.        

\subsection{Comments on the proofs of Theorems \ref{A} and \ref{B}} 
Before outlining the proof of Theorem \ref{B}, let us explain how Theorem \ref{B} can be used to deduce Theorem \ref{A}.

Theorem \ref{B}  implies that if $\Gamma\curvearrowright (X,\mu)$ is as in Theorem \ref{A}, then the II$_1$ factor $L^{\infty}(X)\rtimes\Gamma$ has a unique Cartan subalgebra. 
Thus, any free ergodic pmp action 
$\Lambda\curvearrowright (Y,\nu)$ such that $L^{\infty}(X)\rtimes\Gamma\cong L^{\infty}(Y)\rtimes\Lambda$ is orbit equivalent to $\Gamma\curvearrowright (X,\mu)$. 
Now, as is well-known, $\tilde B_n$ is isomorphic to a finite index subgroup of the mapping class group of the sphere with $n+1$ punctures. Since $\Gamma$ is a finite index subgroup
of $\tilde B_{n_1}\times\cdots\times\tilde B_{n_k}$, for some $n_1,\ldots,n_k\geqslant 4$, then the third author's OE superrigidity theorem \cite{Ki06} gives that any free pmp action $\Lambda\curvearrowright (Y,\nu)$ that is orbit equivalent to $\Gamma\curvearrowright (X,\mu)$ is necessarily stably conjugate to it. 

Let us now outline the proof of Theorem \ref{B} in the case when $\Gamma=\tilde P_n$, for some $n\geqslant 3$.
The proof  has two main ingredients. The first one is S. Popa and S. Vaes' dichotomy theorem for normalizers \cite{PV11} which  asserts that if $A$ is an amenable  subalgebra of a crossed product $M=N\rtimes\mathbb F_n$, then either a corner of $A$ embeds into $N$, or the normalizing algebra $\mathcal N_{M}(A)''$ is amenable relative to $N$. In order to apply this result to our context, we use the fact that $P_n$ is an iterated semi-direct product of free groups.

More precisely, the second ingredient of the proof is the well-known fact that there is an exact sequence $1\rightarrow\mathbb F_{n-1}\rightarrow\tilde P_n\rightarrow\tilde P_{n-1}\rightarrow 1$, for every $n\geqslant 3$. Let $\mathcal F_n$ be the class of countable groups $\Gamma$ for which we can find a sequence of group homomorphisms $\Gamma=\Gamma_n\xrightarrow{\pi_n}\Gamma_{n-1}\xrightarrow{\pi_{n-1}}\cdots \xrightarrow{\pi_2}\Gamma_1$ such that $\Gamma_1$ and $\ker{\pi_i}$ ($2\leqslant i\leqslant n$) are non-abelian free groups. Since $\tilde P_3\cong\mathbb F_2$, the above fact gives that $\tilde P_n\in\mathcal F_{n-2}$, for every $n\geqslant 3$.

By using \cite{PV11} and an inductive argument,  for every $n\geqslant 1$, any group $\Gamma\in\mathcal F_n$ and every free ergodic pmp action $\Gamma\curvearrowright (X,\mu)$, we prove that if $A\subset M:=L^{\infty}(X)\rtimes\Gamma$  is a maximal abelian $*$-subalgebra whose normalizing algebra $\mathcal N_{M}(A)''$ has finite index in $M$, then a corner of $A$ embeds into $L^{\infty}(X)$. This implies that $M$ has a unique Cartan subalgebra, up to unitary conjugacy, and settles 
 Theorem \ref{B} in the case $\Gamma=\tilde P_n$, for $n\geqslant 3$.

\subsection{Acknowledgements} We would like to thank Remi Boutonnet and Cyril Houdayer for useful comments. The first author is especially  grateful to Denis Osin for kindly pointing out to him that the main results of this paper also apply to the families of groups described in the Section \ref{CC}.

\section{Preliminaries}

In this section we collect some notions and results that we will use in the proofs of our main results.

\subsection{Tracial von Neumann algebras} 
A {\it tracial von Neumann algebra} $(M,\tau)$ is a pair that consists of a von Neumann algebra $M$ and a faithful normal tracial state $\tau$. We denote by $M_{+}$ the set of all positive elements $x\in M$ and by $\mathcal Z(M)$ the {\it center} of $M$.
For $x\in M$, we denote by $\|x\|$ the operator norm of $x$, and by $\|x\|_2=\sqrt{\tau(x^*x)}$ the 2-$norm$ of $x$.
Throughout the paper, we denote by $L^2(M)$ the Hilbert space obtained by completing $M$ with respect to $\|\cdot\|_2$, and consider the standard representation $M\subset\mathbb B(L^2(M))$.

Let $(M,\tau)$ be a tracial von Neumann algebra and $P\subset M$ a von Neumann subalgebra. {\it Jones's basic construction} $\langle M,e_P\rangle\subset \mathbb B(L^2(M))$ is the von Neumann algebra generated by $M$ and the orthogonal projection $e_P:L^2(M)\rightarrow L^2(P)$. It is endowed with a faithful semifinite trace $Tr$ given by $Tr(xe_Py)=\tau(xy)$, for all $x,y\in M$. Also, we note that $E_P:={e_P}_{|M}:M\rightarrow P$ is the unique $\tau$-preserving conditional expectation onto $P$.

We say that $P\subset M$ is a {\it masa} if it is a maximal abelian $*$-subalgebra. 
The {\it normalizer of $P$ inside $M$}, denoted by $\mathcal N_{M}(P)$, is the set of all unitaries $u\in M$ such that $uPu^*=P$. We say that $P$ is {\it regular} in $M$ if $\mathcal N_{M}(P)''=M$. A {\it Cartan subalgebra} is a regular masa $P\subset M$.

If $\Cal M$ is a von Neumann algebra together with a subset $S\subset\Cal M$, then a state $\phi:\Cal M\rightarrow\mathbb C$ is called $S$-{\it central} if $\phi(xT)=\phi(Tx)$, for all $T\in\Cal M$ and $x\in S$. A tracial von Neumann algebra $(M,\tau)$ is called {\it amenable} if there exists an  $M$-central state $\phi:\mathbb B(L^2(M))\rightarrow\mathbb C$ such that $\phi(x)=\tau(x)$, for all $x\in M$. By a well-known theorem of A. Connes \cite{Co75}, $(M,\tau)$ is amenable if and only if it is approximately finite dimensional.

\subsection{Popa's intertwining-by-bimodules technique} We next recall S. Popa's powerful \\ {\it intertwining-by-bimodules} technique for conjugating subalgebras of tracial von Neumann algebras
 (see \cite [Theorem 2.1 and Corollary 2.3]{Po03}).

\begin {theorem}[Popa, \cite{Po03}]\label{corner} Let $(M,\tau)$ be a separable tracial von Neumann algebra and $P,Q$ be two (not necessarily unital) von Neumann subalgebras of $M$. 

Then the following are equivalent:

\begin{enumerate}

\item There exist  non-zero projections $p\in P, q\in Q$, a $*$-homomorphism $\theta:pPp\rightarrow qQq$  and a non-zero partial isometry $v\in qMp$ such that $\theta(x)v=vx$, for all $x\in pPp$.

\item There is no sequence $u_n\in\mathcal U(P)$ satisfying $\|E_Q(xu_ny)\|_2\rightarrow 0$, for all $x,y\in M$.
\end{enumerate}

\end{theorem}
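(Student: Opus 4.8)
The plan is to transport both conditions into Jones's basic construction $\langle M,e_Q\rangle$, equipped with its semifinite trace $Tr$, and to read off condition (1) from the position of $e_Q$ relative to the relative commutant $P'\cap\langle M,e_Q\rangle$. As a preliminary step I would record the quantitative reformulation of (2): a routine maximality argument shows that (2) holds if and only if there exist finitely many elements $x_1,\dots,x_m\in M$ and a constant $\delta>0$ such that $\sum_{i=1}^m\|E_Q(x_iu)\|_2^2\geqslant\delta$ for every $u\in\mathcal U(P)$. The whole proof then rests on manufacturing, from such a uniform lower bound, a nonzero finite-trace projection in $P'\cap\langle M,e_Q\rangle$, and conversely.

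For the implication $(2)\Rightarrow(1)$, I would set $a=\sum_{i=1}^m x_i^*e_Qx_i\in\langle M,e_Q\rangle_+$, which satisfies $Tr(a)=\sum_i\|x_i\|_2^2<\infty$, and form the convex hull $K$ of $\{uau^*:u\in\mathcal U(P)\}$, closed in the Hilbert space $L^2(\langle M,e_Q\rangle,Tr)$. Since $Tr(uau^*)=Tr(a)$ is constant, $K$ is bounded and weakly compact, so it contains a unique element $b$ of minimal $\|\cdot\|_{2,Tr}$-norm; uniqueness forces $b$ to be fixed by conjugation by every $u\in\mathcal U(P)$, hence $b\in P'\cap\langle M,e_Q\rangle$, with $b\geqslant0$ and $Tr(b)\leqslant Tr(a)<\infty$. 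A direct computation gives $Tr(uau^*e_Q)=\sum_i\|E_Q(x_iu^*)\|_2^2\geqslant\delta$ for every $u\in\mathcal U(P)$, and since $Tr(\,\cdot\;e_Q)$ is $\|\cdot\|_{2,Tr}$-continuous (as $e_Q$ has finite trace), passing to the convex hull yields $Tr(be_Q)\geqslant\delta$; in particular $b\neq0$. A spectral projection of $b$ then produces a nonzero projection $e\in P'\cap\langle M,e_Q\rangle$ with $Tr(e)<\infty$. Finally, because $Tr(e)<\infty$, the range of $e$ is a $P$-$Q$-subbimodule of $L^2(M)$ that is finitely generated as a right $Q$-module, and polar-decomposing a nonzero bounded right $Q$-modular operator into $L^2(M)$ delivers the projections $p\in P$, $q\in Q$, the $*$-homomorphism $\theta\colon pPp\to qQq$ and the partial isometry $v\in qMp$ with $\theta(x)v=vx$.

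For the converse $(1)\Rightarrow(2)$, I would run this backwards. The intertwining relation $\theta(x)v=vx$ for $x\in pPp$ forces the $P$-$Q$-subbimodule generated by $v$ to have finite right $Q$-dimension, which amounts to a nonzero finite-trace projection $e\in P'\cap\langle M,e_Q\rangle$. Writing $e=\sum_{j=1}^d w_je_Qw_j^*$ for suitable $w_j\in M$ and using $ueu^*=e$ for $u\in\mathcal U(P)$, one computes $\sum_{j=1}^d\|E_Q(w_j^*u)\|_2^2=Tr(e\,e_Q)=:c>0$ for all $u\in\mathcal U(P)$. This uniform lower bound is incompatible with the existence of a sequence $u_n\in\mathcal U(P)$ for which $\|E_Q(xu_ny)\|_2\to0$ for all $x,y$, and hence establishes (2).

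The step I expect to be the main obstacle is the extraction of the analytic data $(p,q,\theta,v)$ from the abstract finite-trace $P$-invariant projection $e$ in $(2)\Rightarrow(1)$. It requires controlling the non-unital bookkeeping (choosing the cutoffs $p$ and $q$, and checking that $v^*v$ commutes with $pPp$ so that $\theta$ is well defined and multiplicative), together with a careful polar decomposition of a right $Q$-modular operator; the fact that $Tr$ is merely semifinite, not finite, is what makes the preceding compactness and minimal-norm arguments, as well as the finite right-module decomposition $e=\sum_j w_je_Qw_j^*$, delicate to set up rigorously.
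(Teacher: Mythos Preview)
The paper does not prove this theorem; it is recorded in the preliminaries as a result of Popa, with a citation to \cite[Theorem 2.1 and Corollary 2.3]{Po03}, and no proof is given. So there is no ``paper's own proof'' to compare against.

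That said, your outline is essentially Popa's original argument and is correct. The core of $(2)\Rightarrow(1)$ --- passing to the basic construction, averaging $a=\sum_i x_i^*e_Qx_i$ over $\mathcal U(P)$ to produce a nonzero finite-$Tr$ element of $P'\cap\langle M,e_Q\rangle$, taking a spectral projection, and then reading off a finitely generated $P$--$Q$-sub-bimodule of $L^2(M)$ --- is exactly the standard route, and you have correctly flagged the non-unital bookkeeping in extracting $(p,q,\theta,v)$ as the place requiring the most care. For $(1)\Rightarrow(2)$ your strategy works, but note that this direction can be done more directly without rebuilding a finite-trace projection: from $\theta(x)v=vx$ one checks that $v^*e_Qv\in (pPp)'\cap p\langle M,e_Q\rangle p$ has finite trace, and then for any $u\in\mathcal U(P)$ one gets $\|E_Q(vu v^*)\|_2^2$ bounded below by a positive constant depending only on $v$, which already blocks the existence of the sequence $(u_n)$. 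Also, be slightly careful with the assertion $e=\sum_{j=1}^d w_je_Qw_j^*$ for $w_j\in M$: a finite-$Tr$ projection in $\langle M,e_Q\rangle$ need not admit such an exact finite decomposition, only an approximate one (or a decomposition with coefficients in $L^2$); this does not affect the conclusion but does affect how the computation is written.
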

If one of the two equivalent conditions from Theorem \ref{corner} holds then we say that {\it a corner of $P$ embeds into $Q$ inside $M$}, and write $P\prec_{M}Q$. If we moreover have that $Pp'\prec_{M}Q$, for any non-zero projection  $p'\in P'\cap 1_PM1_P$, then we write $P\prec_{M}^{s}Q$.

\subsection{Finite index inclusions of tracial von Neumann algebras} If $P\subset M$ are II$_1$ factors, then the {\it Jones index} of the inclusion $P\subset M$, denoted  $[M:P]$, is  the dimension of $L^2(M)$ as a left  $P$-module. M. Pimsner and S. Popa proved that the index $[M:P]$ is equal to the best constant appearing in several inequalities involving the conditional 
expectation $E_P$ \cite[Theorem 2.2]{PP86}. They also pointed out that these constants can be used to define the index of any inclusion of tracial von Neumann algebras \cite[Remark 2.4]{PP86}.  

\begin{definition}[Pimsner $\&$ Popa, \cite{PP86}] \label{index}
Let $(M,\tau)$ be a tracial von Neumann algebra  with a von Neumann subalgebra $P$. Let $$\lambda=\inf\;\{\|E_P(x)\|_2^2/\|x\|_2^2\;|\; x\in M_{+}, \; x\not=0\}.$$
The {\it index of the inclusion $P\subset M$} is defined by letting $[M:P]=\lambda^{-1}$, where we use the convention that $\frac{1}{0}=\infty$.
\end{definition}

Note that  if $M$ and $P$ are II$_1$ factors, then by \cite[Theorem 2.2]{PP86}, the usual index $[M:P]$ is equal to the one introduced in Definition \ref{index}. Also, notice that if $P\subset M$ has finite index, and $p\in P$ is a projection, then $pPp\subset pMp$ also has finite index.
This notion of  index is suitable for our purposes, as it allows us to prove the following technical result.

\begin{lem}\label{findex}
Let $(M,\tau)$ be a tracial von Neumann algebra, $p\in M$  a non-zero projection, and $P\subset pMp$  a von Neumann subalgebra. Let $N\subset M$ be a von Neumann subalgebra. Denote by $q\in N$ the support projection of $E_N(p)$, and by $P_0\subset qNq$ the unital von Neumann subalgebra generated by $E_N(P)$.  For every $t\geqslant 0$, we define $q_t=1_{[t,\infty)}(E_N(p))$, using  functional calculus.  

If the inclusion $P\subset pMp$ has finite index, then the inclusion $q_tP_0q_t\subset q_tNq_t$ has finite index, for every $t>0$.  
\end{lem}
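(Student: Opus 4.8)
The plan is to establish the finite-index conclusion directly from the Pimsner--Popa $L^2$-estimate of Definition \ref{index}, i.e.\ to produce a constant $\mu>0$ with $\|E_{q_tP_0q_t}(y)\|_2^2\geqslant\mu\|y\|_2^2$ for every $y\in(q_tNq_t)_+$. First I would record two simplifications. Since $P$ is unital in $pMp$, i.e.\ $p=1_P\in P$, we have $E_N(p)\in E_N(P)\subset P_0$, so $q_t=1_{[t,\infty)}(E_N(p))$ lies in $P_0$; consequently $E_{q_tP_0q_t}(y)=E_{P_0}(y)$ for $y\in q_tNq_t$, and it suffices to bound $\|E_{P_0}(y)\|_2$ from below. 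Moreover, because $0\leqslant E_N(p)\leqslant 1$ and $q_t$ commutes with $E_N(p)$, on the corner we have the crucial lower bound $E_N(p)q_t\geqslant t\,q_t$. Writing $\lambda=[pMp:P]^{-1}$, the finite-index hypothesis and Definition \ref{index} give $\|E_P(z)\|_2^2\geqslant\lambda\|z\|_2^2$ for all $z\in(pMp)_+$, with $\lambda>0$.

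The core of the argument is to compare, for a fixed $y\in(q_tNq_t)_+$, the three quantities $\|E_{P_0}(y)\|_2$, $\|E_P(pyp)\|_2$ and $\|y\|_2$ by lifting $y$ to $pyp\in(pMp)_+$ and applying the estimate above to $z=pyp$. This requires a two-sided comparison. For the lower end, I would compute $\|pyp\|_2^2=\tau(y\,E_N(pyp))$ and invoke the operator Cauchy--Schwarz inequality for the unital completely positive map $E_N$, with $a=y^{1/2}p$ and $b=y^{1/2}$, to obtain
\[
E_N(pyp)\;=\;E_N(a^*a)\;\geqslant\;E_N(a^*b)\,E_N(b^*b)^{-1}\,E_N(b^*a)\;=\;E_N(p)\,y\,E_N(p),
\]
using $E_N(py)=E_N(p)y$ and $E_N(y)=y$ for $y\in N$ (first for $y$ invertible in $q_tNq_t$, then by approximation). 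Combined with $E_N(p)q_t\geqslant t\,q_t$, a short trace manipulation rewriting $\tau\big(y\,E_N(p)\,y\,E_N(p)\big)=\|y^{1/2}E_N(p)q_t\,y^{1/2}\|_2^2$ and using $y^{1/2}E_N(p)q_t\,y^{1/2}\geqslant t\,y$ yields $\|pyp\|_2^2\geqslant t^2\|y\|_2^2$.

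For the upper end I would pair $E_P(pyp)$ against $y$: since $E_P(pyp)\in P\subset pMp$, cyclicity of $\tau$ gives $\|E_P(pyp)\|_2^2=\tau\big(E_P(pyp)\,y\big)=\tau\big(c\,E_{P_0}(y)\big)$, where $c=E_N(E_P(pyp))\in E_N(P)\subset P_0$; here I use that $E_N$ is trace preserving with $y\in N$, and then that $c\in P_0$. Cauchy--Schwarz together with the fact that $E_N$ is a $\|\cdot\|_2$-contraction gives $\|E_P(pyp)\|_2^2\leqslant\|c\|_2\,\|E_{P_0}(y)\|_2\leqslant\|E_P(pyp)\|_2\,\|E_{P_0}(y)\|_2$, hence $\|E_P(pyp)\|_2\leqslant\|E_{P_0}(y)\|_2$. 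Chaining the three inequalities,
\[
\|E_{P_0}(y)\|_2^2\;\geqslant\;\|E_P(pyp)\|_2^2\;\geqslant\;\lambda\,\|pyp\|_2^2\;\geqslant\;\lambda t^2\,\|y\|_2^2,
\]
so $[q_tNq_t:q_tP_0q_t]\leqslant(\lambda t^2)^{-1}<\infty$, as desired.

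The step I expect to be the main obstacle is precisely the mismatch that $E_N$ does not intertwine $E_P$ with $E_{P_0}$ (the subalgebras $P$ and $N$ are transverse, so these conditional expectations do not commute), which blocks any naive ``push-down'' of the Pimsner--Popa inequality through $E_N$. The way around it is the two-sided scheme above, and it is exactly here that the cutoff $t>0$ is indispensable: the lower bound $E_N(pyp)\geqslant E_N(p)\,y\,E_N(p)$ is only useful where $E_N(p)$ is bounded away from $0$, i.e.\ on $q_t$ rather than on all of $q$, which accounts for why the conclusion is stated for $t>0$ and may genuinely fail as $t\to 0$.
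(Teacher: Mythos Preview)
Your proof is correct and essentially identical to the paper's: both pivot on the identity $\|E_P(pyp)\|_2^2=\tau(E_P(pyp)\,y)$, bound it above by $\|E_{P_0}(y)\|_2$ after pushing through $E_N$, and below by $\lambda\|pyp\|_2^2\geqslant\lambda t^2\|y\|_2^2$ using $E_N(p)q_t\geqslant t\,q_t$. The only cosmetic differences are that the paper obtains $\|pxp\|_2^2\geqslant t^2\|x\|_2^2$ directly from the $\|\cdot\|_2$-contractivity of $E_N$ (via $\|x^{1/2}px^{1/2}\|_2\geqslant\|E_N(x^{1/2}px^{1/2})\|_2=\|x^{1/2}E_N(p)x^{1/2}\|_2$) rather than through the operator Cauchy--Schwarz inequality, and uses the cruder estimate $\|c\|_2\leqslant\|x\|_2$ instead of your $\|c\|_2\leqslant\|E_P(pxp)\|_2$, so your final Pimsner--Popa constant $\lambda t^2$ is in fact slightly sharper than the paper's $\lambda^2 t^4$.
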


\begin{proof} The proof we give follows closely the proof of \cite[Lemma 1.6 (1)]{Io11}. As the inclusion $P\subset pMp$ has finite index, there exists $\lambda>0$ such that $\|E_P(x)\|_2^2\geqslant \lambda\|x\|_2^2$, for all $x\in (pMp)_{+}$.

Let $t>0$ and notice that $E_N(p)q_t\geqslant tq_t$. Towards showing that the inclusion $q_tP_0q_t\subset q_tNq_t$ has finite index, take $x\in (q_tNq_t)_{+}$.

Firstly, since $x\in N$ and $E_N(E_P(pxp))\in P_0$ (by the definition of $P_0$) we get that 

\begin{equation}\label{ecu1}
|\tau(E_P(pxp)x)|=|\tau(E_N(E_P(pxp))x)|=|\tau(E_N(E_P(pxp))E_{P_0}(x))|\leqslant \|x\|_2\|E_{P_0}(x)\|_2.
\end{equation}

Secondly, since $pxp\in (pMp)_{+}$, we have that $\|E_P(pxp)\|_2^2\geqslant \lambda\|pxp\|_2^2$ and thus
\begin{equation}\label{ecu2}
|\tau(E_P(pxp)x)|=|\tau(E_P(pxp)pxp)|=\|E_P(pxp)\|_2^2\geqslant \lambda\|pxp\|_2^2.
\end{equation}

 Thirdly, since $E_N(p)q_t\geqslant tq_t$, we get that 
 \begin{equation}\label{ecu3}
 \|pxp\|_2^2=\tau(pxpx)=\|x^{\frac{1}{2}}px^{\frac{1}{2}}\|_2^2=\|x^{\frac{1}{2}}pq_tx^{\frac{1}{2}}\|_2^2\geqslant \|E_N(x^{\frac{1}{2}}pq_tx^{\frac{1}{2}})\|_2^2=
 \end{equation}
 $$  \|x^{\frac{1}{2}}E_N(p)q_tx^{\frac{1}{2}}\|_2\geqslant t^2\|x^{\frac{1}{2}}q_tx^{\frac{1}{2}}\|_2^2=t^2\|x\|_2^2.$$
 
 By combining equations \ref{ecu1}, \ref{ecu2} and \ref{ecu3} we conclude that $\|E_{P_0}(x)\|_2\geqslant \lambda t^2\|x\|_2$. Since $x\in (q_tMq_t)_{+}$ was arbitrary, the inclusion $q_tP_0q_t\subset q_tNq_t$ has finite index.
\end{proof}

\begin{lem} \cite[Lemma 2.3]{PP86}\label{ramen}
Let $(M,\tau)$ be a tracial von Neumann algebra and $P\subset M$ be a von Neumann subalgebra such that the inclusion $P\subset M$ has finite index. Then $M\prec_{M}^{s}P$.
\end{lem}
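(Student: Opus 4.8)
The plan is to prove $M\prec^s_M P$ by contradiction, feeding the Pimsner--Popa basis of the finite-index inclusion $P\subset M$ into criterion (2) of Theorem \ref{corner}.

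\emph{Setting up the basis.} Since $P\subset M$ has finite index, the semifinite trace $Tr$ on the basic construction $\langle M,e_P\rangle$ satisfies $Tr(1)=[M:P]<\infty$, so $\langle M,e_P\rangle$ is a finite von Neumann algebra with $Tr(e_P)=\tau(1)=1$. Decomposing its identity into projections subequivalent to $e_P$ yields, as in \cite{PP86}, an at most countable family $(m_i)_i\subset M$ with $\sum_i m_ie_Pm_i^*=1$ in the strong operator topology. Applying $Tr$ and using $Tr(m_ie_Pm_i^*)=\tau(m_im_i^*)$ gives the finiteness
\[
\sum_i\tau(m_im_i^*)=Tr(1)=[M:P]<\infty,
\]
while evaluating the identity $\sum_i m_ie_Pm_i^*=1$ on $\hat x\in L^2(M)$ and pairing with $\hat x$ gives the Parseval identity $\|x\|_2^2=\sum_i\|E_P(m_i^*x)\|_2^2$ for all $x\in M$.

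\emph{The contradiction.} Suppose $M\not\prec^s_M P$. By definition there is a non-zero projection $z\in M'\cap M=\mathcal Z(M)$ with $Mz\not\prec_M P$, so by criterion (2) of Theorem \ref{corner} we may choose unitaries $u_n\in\mathcal U(Mz)$, i.e.\ $u_n^*u_n=u_nu_n^*=z$, satisfying $\|E_P(xu_ny)\|_2\to 0$ for all $x,y\in M$; in particular $\|E_P(m_i^*u_n)\|_2\to 0$ as $n\to\infty$ for each fixed $i$. On the other hand, for all $i$ and $n$,
\[
\|E_P(m_i^*u_n)\|_2^2\leqslant\|m_i^*u_n\|_2^2=\tau(u_n^*m_im_i^*u_n)=\tau(m_im_i^*z),
\]
an $n$-independent bound whose sum over $i$ is at most $\sum_i\tau(m_im_i^*)<\infty$. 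Hence the Parseval identity $\tau(z)=\|u_n\|_2^2=\sum_i\|E_P(m_i^*u_n)\|_2^2$ holds for every $n$ with summands dominated by a fixed summable sequence, and dominated convergence shows the right-hand side tends to $0$ as $n\to\infty$; since its value is the positive constant $\tau(z)$ for every $n$, this is absurd. Therefore $Mz\prec_M P$ for every non-zero central projection $z$, that is, $M\prec^s_M P$.

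\emph{Main difficulty.} The only non-formal input is the first step: converting the finiteness of the $L^2$-index of Definition \ref{index} into an honest Pimsner--Popa basis $(m_i)$ with $\sum_i\tau(m_im_i^*)<\infty$, which is precisely the content of \cite{PP86} (the identification of the $L^2$-index with $Tr(1)$ together with the decomposition of $1\in\langle M,e_P\rangle$ into pieces subequivalent to $e_P$). Granting that, the remainder is the routine dominated-convergence estimate above. If one prefers to bypass bases, the same conclusion follows from the relative-commutant description of intertwining, since $z$ is a non-zero finite-trace projection in $(Mz)'\cap z\langle M,e_P\rangle z$; I have favored the basis argument because it relies only on criterion (2), which is the form of Popa's technique recorded in Theorem \ref{corner}.
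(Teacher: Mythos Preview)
Your argument is correct and takes a genuinely different route from the paper's. The paper proceeds by contradiction as you do, but instead of pairing criterion~(2) with a Pimsner--Popa basis, it uses the relative-commutant characterization of intertwining: from $Mz\nprec_{Mz}Pz$ it deduces that $(Mz)'\cap\langle Mz,e_{Pz}\rangle$ contains no nonzero projection of finite trace, and then invokes \cite[Lemma~2.3]{PP86} (with details in \cite[Lemma~1.4]{Io11}) to produce, for each $\varepsilon>0$, a projection $e\in Mz$ with $\|E_{Pz}(e)\|_2<\varepsilon\|e\|_2$. This immediately violates Definition~\ref{index} applied to the positive element $e$. The virtue of the paper's route is that it never converts the hypothesis $\lambda>0$ into the finiteness of $Tr(1)$; it stays with the raw $\lambda$-inequality on positive elements throughout. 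Your route front-loads that conversion (which you correctly flag as the one nontrivial input from \cite{PP86}) and then finishes with a clean dominated-convergence computation on unitaries. Two small remarks: your phrase ``identification of the $L^2$-index with $Tr(1)$'' is only exact for factor inclusions---in the general tracial setting the two numbers may differ, though the implication $\lambda>0\Rightarrow Tr(1)<\infty$ that you actually need does hold; and your closing alternative (that $z$ is a nonzero finite-trace projection in $(Mz)'\cap z\langle M,e_P\rangle z$) again presupposes $Tr(z)<\infty$, so it is not an independent bypass of the basis step---the paper's argument is the genuine way around it.
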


\begin{proof}
Assume by contradiction that $Mz\nprec_{M}P$, for some non-zero  projection $z\in \mathcal Z(M)$. Thus, $Mz\nprec_{Mz}Pz$, which by \cite[Theorem 2.1]{Po03} implies that $(Mz)'\cap\langle Mz,e_{Pz}\rangle$ does not contain any non-trivial projection of finite trace in $\langle Mz,e_{Pz}\rangle$.  Then \cite[Lemma 2.3]{PP86} implies that for every $\varepsilon>0$, we can find a projection $e\in Mz$ such that $\|E_{Pz}(e)\|_2<\varepsilon\|e\|_2$ (see \cite[Lemma 1.4]{Io11} for details). This contradicts the assumption that $P\subset M$ has finite index.
\end{proof}


\subsection{Dichotomy for normalizers inside crossed products by free groups}
Recently, S. Popa and S. Vaes proved a remarkable dichotomy for normalizers inside arbitrary crossed products $B\rtimes\Gamma$ by many groups $\Gamma$, including the free groups \cite[Theorem 1.6]{PV11}. 
In order to state their theorem, we first need to recall the notion of relative amenability.

\begin{definition} \cite[Section 2.2]{OP07}
Let $(M,\tau)$ be a tracial von Neumann algebra, $p\in M$ a projection, and $P\subset pMp,Q\subset M$ von Neumann subalgebras. We say that $P$ is {\it amenable relative to $Q$ inside $M$} if there exists a $P$-central state $\phi:p\langle M,e_Q\rangle p\rightarrow\mathbb C$ such that $\phi(x)=\tau(x)$, for all $x\in pMp$.
\end{definition}

\begin{theorem} [Popa $\&$ Vaes, \cite{PV11}]\label{pv} Let $\Gamma$ be a weakly amenable group that admits a proper $1$-cocycle into an orthogonal representation that is weakly contained in the regular representation. Let $\Gamma\curvearrowright B$ be a trace preserving action on a tracial von Neumann algebra $(B,\tau)$. Denote $M=B\rtimes\Gamma$. Let $p\in M$ be a projection and $A\subset pMp$ a von Neumann subalgebra that is amenable relative to $B$ inside $M$. 

Then either $A\prec_{M}B$ or $P:=\mathcal N_{pMp}(A)''$ is amenable relative to $B$ inside $M$.
\end{theorem}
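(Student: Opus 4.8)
This is precisely the normalizer dichotomy of Popa and Vaes, so in the paper one simply invokes \cite[Theorem 1.6]{PV11}; nevertheless, let me sketch the deformation/rigidity strategy that proves it. The plan is to convert the two analytic hypotheses on $\Gamma$ into the two complementary engines of the theory: a malleable deformation coming from the proper cocycle, and an approximating net coming from weak amenability.

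First I would build the deformation. From the proper $1$-cocycle $b\colon\Gamma\to\mathcal H_\pi$ into the real orthogonal representation $\pi\prec\lambda_\Gamma$, I would pass to the Gaussian probability space $(Y_\pi,\nu_\pi)$ functorially associated to $\pi$, set $\widetilde B=B\,\bar\otimes\,L^\infty(Y_\pi)$ with the diagonal $\Gamma$-action, and form $\widetilde M=\widetilde B\rtimes\Gamma\supseteq M$. The cocycle produces a one-parameter group $(\theta_t)$ of trace-preserving automorphisms of $\widetilde M$ together with a period-$2$ symmetry $\beta$, with $\theta_t\to\mathrm{id}_M$ pointwise in $\|\cdot\|_2$ (this uses properness of $b$) and the transversality inequality $\|x-\theta_{2t}(x)\|_2\leqslant 2\,\|\theta_t(x)-E_M(\theta_t(x))\|_2$ for $x\in M$. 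The weak containment $\pi\prec\lambda_\Gamma$ is what guarantees that this Gaussian extension is regular enough — compatible with relative amenability over $B$ — for the deformation to be usable in Popa's argument.

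Next I would package the two hypotheses into a single almost-invariant net, following Ozawa and Popa. Weak amenability of $\Gamma$ supplies finitely supported Herz--Schur multipliers $\varphi_n$ on $\Gamma$ with $\sup_n\|\varphi_n\|_{\mathrm{cb}}<\infty$ and $\varphi_n\to 1$ pointwise; these induce normal completely bounded maps $\Phi_n$ on $M$ that approximate the identity and are compact relative to $B$. Combining this with the assumption that $A$ is amenable relative to $B$ yields a weak compactness property of the normalizer: a net $(\xi_n)$ of unit vectors in the $\|\cdot\|_{2,\mathrm{Tr}}$-completion of a corner of $\langle M,e_B\rangle$ that is asymptotically $A$-central, asymptotically invariant under $\mathrm{Ad}(u)$ for every $u\in\mathcal N_{pMp}(A)$, and localized near $B$.

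Finally I would run the spectral-gap dichotomy against $(\theta_t)$. Applying the deformation to the almost-central net and letting $t\to 0$, the decisive question is whether $\theta_t$ converges uniformly in $\|\cdot\|_2$ on the unit ball $(A)_1$. If it does, a standard ``uniform convergence implies intertwining'' lemma, using $\beta$ and transversality, forces $A\prec_M B$. If it does not, then the defect of convergence on $(A)_1$, transported through $(\xi_n)$ and the transversality inequality, produces an $\mathcal N_{pMp}(A)$-central state on $p\langle M,e_B\rangle p$ restricting to the trace on $pMp$, which is exactly a witness that $P=\mathcal N_{pMp}(A)''$ is amenable relative to $B$. The hard part is precisely this last propagation step: showing that a failure of uniform convergence on $A$ can be carried along the normalizer to build the relative hypertrace, which requires delicately reconciling the transversality inequality with the almost-centrality of $(\xi_n)$ and controlling the completely bounded norms uniformly in the deformation parameter. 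This spectral-gap argument is the technical heart of \cite{PV11}.
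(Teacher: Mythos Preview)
Your proposal is correct: the paper does not prove this statement at all but simply records it as \cite[Theorem 1.6]{PV11}, exactly as you say in your first sentence. The remainder of your answer is a bonus sketch of the Popa--Vaes argument itself, and it captures the architecture of \cite{PV11} accurately (Gaussian/malleable deformation from the proper cocycle, relative weak compactness from weak amenability combined with relative amenability of $A$, and the spectral-gap dichotomy that separates $A\prec_M B$ from relative amenability of $\mathcal N_{pMp}(A)''$).
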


Recall from \cite{OP07} that a II$_1$ factor $M$ is called {\it strongly solid} if $\mathcal N_{M}(A)''$ is amenable, for any diffuse amenable von Neumann subalgebra $A\subset M$.

\begin{definition} We say that a group $\Gamma$ is {\it relatively strongly solid} if it is non-amenable and satisfies the dichotomy of Theorem \ref{pv}. We denote by 
$\Cal C_{rss}$ the class of all such groups. \end{definition}

Note that  by Theorem \ref{pv}, $\mathcal C_{rss}$ contains in particular all non-abelian free groups.

\subsection{Braid and pure braid group}\label{braidgroups}
In this subsection we record several algebraic properties of the braid and pure braid groups (see for instance \cite[Chapter 9]{FM11}). Recall that the {\it braid group on $n$ strands}, denoted  by $B_n$, has the following presentation $$B_n=\langle\sigma_1,\ldots,\sigma_{n-1}|\;\sigma_i\sigma_{i+1}\sigma_i=\sigma_{i+1}\sigma_i\sigma_{i+1},\;\text{for all $i$},\;\; \text{and  $\sigma_i\sigma_j=\sigma_j\sigma_i,$\;\; if $|i-j|>1$}\rangle$$

The {\it pure braid group $P_n$} is the kernel of the natural homomorphism $B_n\rightarrow S_n$, where $S_n$ denotes the  group of permutations of $\{1,\ldots,n\}$.  In other words, $P_n$ is the subgroup of all braids that induce the trivial permutation on $\{1,\ldots,n\}$. The centers of $B_n$ and $P_n$ coincide, $Z(B_n)=Z(P_n)$, and  are 
isomorphic with the infinite cyclic group. Moreover, $P_n$ splits as a direct product over its center: $P_n\cong P_n/Z(P_n)\times Z(P_n)$.

 Most of the rigidity results that we will prove apply to the central quotients of $B_n$ and $P_n$, which we denote throughout by $\tilde B_n=B_n/Z(B_n)$ and $\tilde P_n=P_n/Z(P_n)$. It is clear that $\tilde P_n$ is isomorphic to a finite index subgroup of $\tilde B_n$. Also, since $P_3\cong \mathbb F_2\times \bb Z$, we have that $\tilde P_3\cong \bb F_2$. 

Next, we isolate the key property of $\tilde P_n$ and $\tilde B_n$ that will allow us to prove uniqueness of Cartan subalgebras for II$_1$ factors associated with their actions. First, we introduce some terminology.

\begin{definition}\label{chain}Let $\Cal C$ be a class of groups. We define $Quot_{1}(\mathcal C)=\mathcal C$. Given an integer $n\geqslant 2$, we say that a group $\Gamma$ belongs to the class $Quot_n(\Cal C)$ if there exist: 
\begin{enumerate}
\item a collection of groups  $\Gamma_k$, $1\leqslant k\leqslant n$, such that $\G_1\in \Cal C$, $\G$ is commensurable to $\G_n$ (i.e. there are finite index subgroups $\Lambda<\G$ and $\Sigma<\G_n$ with $\Lambda\cong\Sigma$), and
\item a collection of surjective homomorphisms $ \pi_k:\G_{k}\rightarrow \G_{k-1}$ such that $ker(\pi_k)\in \Cal C$, for all $2\leqslant k\leqslant n$.
\end{enumerate}
\end{definition}

\begin{lem}\label{quot}
Let $\Cal C$ be a class of groups and $\Gamma_k$, $1\leqslant k\leqslant n$, be a collection of groups, for $n\geqslant 2$. Assume that there exist surjective homomorphisms $ \pi_k:\G_{k}\rightarrow \G_{k-1}$ satisfying $ker(\pi_k)\in \Cal C$, for all $2\leqslant k\leqslant n$. If we denote by $p_n=\pi_2\circ\pi_3\circ\cdots\circ\pi_n:\Gamma_n\rightarrow\Gamma_1$ then $\ker(p_n)\in Quot_{n-1}(\Cal C).$
\end{lem}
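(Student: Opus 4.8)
The plan is to exhibit the required chain of length $n-1$ explicitly, rather than to induct on the definition of $Quot$; this avoids having to manipulate the commensurability that $Quot_{n-2}(\Cal C)$ would introduce at the top of a shorter chain, and in fact shows that $\ker(p_n)$ is the top term of an \emph{honest} chain of surjections with kernels in $\Cal C$. For $2\leqslant j\leqslant n$ put $p_j=\pi_2\circ\cdots\circ\pi_j:\Gamma_j\to\Gamma_1$, so that $p_j=p_{j-1}\circ\pi_j$ (with $p_2=\pi_2$), and write $K_j=\ker(\pi_j)\in\Cal C$ and $\Delta_j=\ker(p_j)\leqslant\Gamma_j$. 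The crux will be that each $\pi_j$ restricts to a \emph{surjection} between the consecutive kernels $\Delta_j$ and $\Delta_{j-1}$.

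First I would record, for $3\leqslant j\leqslant n$, the identity $\Delta_j=p_j^{-1}(\{1\})=(p_{j-1}\circ\pi_j)^{-1}(\{1\})=\pi_j^{-1}(\Delta_{j-1})$, so that $\Delta_j$ is a \emph{full} $\pi_j$-preimage. Since $\pi_j$ is surjective, $\pi_j(\Delta_j)=\pi_j(\pi_j^{-1}(\Delta_{j-1}))=\Delta_{j-1}$, so the restriction $\pi_j|_{\Delta_j}:\Delta_j\to\Delta_{j-1}$ is onto. Its kernel is $\ker(\pi_j)\cap\Delta_j=K_j\cap\Delta_j$; and since $1\in\Delta_{j-1}$ forces $K_j=\pi_j^{-1}(\{1\})\subseteq\pi_j^{-1}(\Delta_{j-1})=\Delta_j$, this kernel equals $K_j\in\Cal C$. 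At the bottom, $\Delta_2=\ker(p_2)=\ker(\pi_2)=K_2\in\Cal C$.

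Assembling these observations, the groups $\Delta_2,\Delta_3,\ldots,\Delta_n$ (that is, $n-1$ groups) together with the surjections $\pi_j|_{\Delta_j}:\Delta_j\to\Delta_{j-1}$ for $3\leqslant j\leqslant n$ (that is, $n-2$ homomorphisms) form a chain whose bottom term $\Delta_2$ lies in $\Cal C$ and all of whose kernels $K_j$ lie in $\Cal C$. Relabeling $H_k:=\Delta_{k+1}$ for $1\leqslant k\leqslant n-1$, this is exactly the data required by Definition \ref{chain} with $n$ replaced by $n-1$; its top term is $H_{n-1}=\Delta_n=\ker(p_n)$, which is trivially commensurable to itself. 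Hence $\ker(p_n)\in Quot_{n-1}(\Cal C)$. (When $n=2$ the chain degenerates to the single group $\Delta_2=K_2$, and the claim is just $\ker(p_2)\in\Cal C=Quot_1(\Cal C)$.)

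There is no genuine obstacle here: the argument is a direct construction and the remaining content is bookkeeping. The single point that must be handled with care is the surjectivity of each $\pi_j|_{\Delta_j}$, which depends precisely on $\Delta_j$ being the \emph{full} preimage $\pi_j^{-1}(\Delta_{j-1})$ together with the surjectivity of $\pi_j$; for an arbitrary subgroup merely mapping into $\Delta_{j-1}$ this surjectivity could fail. Once this is in place, checking that the term and arrow counts match those of $Quot_{n-1}(\Cal C)$ completes the verification.
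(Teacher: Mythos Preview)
Your proof is correct and takes essentially the same approach as the paper: both define $p_j=\pi_2\circ\cdots\circ\pi_j$, set $\Delta_j=\ker(p_j)$ (the paper calls these $\Lambda_k$), and verify that the restrictions $\pi_j|_{\Delta_j}:\Delta_j\to\Delta_{j-1}$ are surjections with kernel $\ker(\pi_j)\in\Cal C$, so that $\Delta_n=\ker(p_n)$ sits atop the required $(n-1)$-chain. Your write-up is just a bit more explicit about the preimage identity $\Delta_j=\pi_j^{-1}(\Delta_{j-1})$ and the degenerate case $n=2$.
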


\begin{proof}
For every $2\leqslant k\leqslant n$, denote by $p_k=\pi_2\circ\pi_3\circ\cdots\circ\pi_k:\Gamma_k\rightarrow\Gamma_1$ and by $\Lambda_k=\ker(p_k)<\Gamma_k$. Then $\pi_k(\Lambda_k)=\Lambda_{k-1}$ and hence the restriction ${\pi_k}_{|\Lambda_k}:\Lambda_k\rightarrow \Lambda_{k-1}$ is well-defined and surjective. Moreover, we have that $\Lambda_2=\ker(\pi_2)\in\Cal C$ and $\ker({\pi_k}_{|\Lambda_k})=\ker(\pi_k)\in\Cal C$, for every $3\leqslant k\leqslant n$. This shows that $\ker(p_n)=\Lambda_n\in Quot_{n-1}(\Cal C)$.
\end{proof}

We also record the following basic fact.

\begin{lem}
\label{prod}
Let $\Cal C$ be a class of groups. 
If $\Gamma_1\in Quot_{n_1}(\mathcal C),\ldots,\Gamma_k\in Quot_{n_k}(\mathcal C)$, then $\Gamma_1\times\ldots\times\Gamma_k\in Quot_{n_1+\cdots+n_k}(\mathcal C)$.

\end{lem}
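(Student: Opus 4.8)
The plan is to reduce the statement to the case $k=2$ by an induction on the number of factors. The base case $k=1$ is trivial. For the inductive step, the inductive hypothesis for $k-1$ factors gives $\Gamma_1\times\cdots\times\Gamma_{k-1}\in Quot_{n_1+\cdots+n_{k-1}}(\mathcal{C})$, and I would then apply the two-factor case to this group together with $\Gamma_k\in Quot_{n_k}(\mathcal{C})$ to obtain membership in $Quot_{n_1+\cdots+n_k}(\mathcal{C})$. Thus everything comes down to proving that if $\Gamma_1\in Quot_{m_1}(\mathcal{C})$ and $\Gamma_2\in Quot_{m_2}(\mathcal{C})$ then $\Gamma_1\times\Gamma_2\in Quot_{m_1+m_2}(\mathcal{C})$, for arbitrary $m_1,m_2\geqslant 1$.

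For the two-factor case, I would fix quotient chains realizing the two memberships: groups $A_{m_1}\xrightarrow{\alpha_{m_1}}\cdots\xrightarrow{\alpha_2}A_1$ with $A_1\in\mathcal{C}$, $\ker\alpha_i\in\mathcal{C}$ and $\Gamma_1$ commensurable to $A_{m_1}$, and similarly $B_{m_2}\xrightarrow{\beta_{m_2}}\cdots\xrightarrow{\beta_2}B_1$ for $\Gamma_2$ (when $m_i=1$ one simply takes the length-one chain $A_1=\Gamma_1$). The idea is to concatenate these chains through the direct product, first contracting the second coordinate and then the first. Explicitly, I would build the chain of length $m_1+m_2$
\[
A_{m_1}\times B_{m_2}\longrightarrow\cdots\longrightarrow A_{m_1}\times B_1\longrightarrow A_{m_1-1}\times B_1\longrightarrow\cdots\longrightarrow A_1\times B_1\longrightarrow B_1,
\]
where the first block of maps are the $\mathrm{id}\times\beta_j$ (with kernels $\ker\beta_j\in\mathcal{C}$), the second block are the $\alpha_i\times\mathrm{id}$ (with kernels $\ker\alpha_i\in\mathcal{C}$), and the final map is the coordinate projection $A_1\times B_1\to B_1$ (with kernel $A_1\in\mathcal{C}$). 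The base of this chain is $B_1\in\mathcal{C}$, all kernels lie in $\mathcal{C}$, and a direct count gives exactly $m_1+m_2$ groups.

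It then remains to check that the top group $A_{m_1}\times B_{m_2}$ is commensurable to $\Gamma_1\times\Gamma_2$, which holds because commensurability is preserved under direct products: finite index subgroups $\Lambda_i$ of $\Gamma_i$ and $\Sigma_i$ of the top of the $i$-th chain with $\Lambda_i\cong\Sigma_i$ yield finite index subgroups $\Lambda_1\times\Lambda_2<\Gamma_1\times\Gamma_2$ and $\Sigma_1\times\Sigma_2<A_{m_1}\times B_{m_2}$ with $\Lambda_1\times\Lambda_2\cong\Sigma_1\times\Sigma_2$. The one point requiring care — and the reason the two lengths simply add rather than overlap — is the bottom of the concatenated chain: after contracting both coordinates one is left with $A_1\times B_1$, which need not belong to $\mathcal{C}$ even though $A_1,B_1$ do. The extra projection $A_1\times B_1\to B_1$, whose kernel $A_1$ lies in $\mathcal{C}$, absorbs one base group into a kernel and deposits the other as the new base, which is precisely what makes the total length come out to $m_1+m_2$. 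This is the only subtle step; the remainder is bookkeeping.
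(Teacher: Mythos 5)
Your proof is correct: the chain concatenation (contract the second coordinate, then the first, then project away $A_1$), the kernel computations, and the observation that commensurability passes to direct products are exactly what is needed, and the length count $m_2+(m_1-1)+1=m_1+m_2$ matches the definition of $Quot_{m_1+m_2}(\mathcal{C})$. The paper records this lemma as a basic fact without giving any proof, and your argument is precisely the natural one it leaves implicit, including the one genuinely subtle point that the final projection $A_1\times B_1\to B_1$ is needed because $A_1\times B_1$ itself need not lie in $\mathcal{C}$.
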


Now, for every $n\geqslant 3$, there exists a surjective homomorphism $\pi_n:P_{n}\rightarrow P_{n-1}$. More precisely, given a pure braid $\gamma\in P_{n}$, we can remove its last strand and obtain a pure braid $\pi_n(\gamma)\in P_{n-1}$. Moreover, we have that $\pi_n(Z(P_n))=Z(P_{n-1})$, and the kernel of $\pi_n$ is isomorphic to the free group on $n-1$ generators, $\mathbb F_{n-1}$, by the Birman exact sequence.  Hence, $\tilde\pi_n:\tilde P_n\rightarrow\tilde P_{n-1}$ given by $\tilde\pi_n(\gamma Z(P_n))=\pi_n(\gamma)Z(P_{n-1})$ is a well-defined surjective homomorphism and $\ker{\tilde\pi_n}\cong\ker{\pi_n}\cong\mathbb F_{n-1}$. Since $\tilde P_n<\tilde B_n$ is a finite index subgroup and $\tilde P_3\cong\mathbb F_2$, we altogether deduce the following:

\begin{cor}\label{surjections} For every $n\geqslant 3$, we have that $\tilde B_n, \tilde P_n\in Quot_{n-2}(\Cal F)$, where $\Cal F$ denotes the class of all non-abelian free groups.  
\end{cor}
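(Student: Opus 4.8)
The plan is to read the corollary off Definition \ref{chain} directly, by assembling the iterated strand-removal maps into a single descending chain of central quotients of pure braid groups, and then to deduce the statement for $\tilde B_n$ from the one for $\tilde P_n$ using commensurability. First I would write down the chain
$$\tilde P_n \xrightarrow{\tilde\pi_n} \tilde P_{n-1} \xrightarrow{\tilde\pi_{n-1}} \cdots \xrightarrow{\tilde\pi_4} \tilde P_3,$$
which consists of exactly $n-2$ groups. As recorded in the paragraph preceding the corollary, for every $k\geqslant 3$ the map $\tilde\pi_k:\tilde P_k\rightarrow\tilde P_{k-1}$ is a well-defined surjective homomorphism with $\ker(\tilde\pi_k)\cong\mathbb F_{k-1}$, by the Birman exact sequence. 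Relabelling the entries of this chain as $\Gamma_1=\tilde P_3,\ \Gamma_2=\tilde P_4,\ \ldots,\ \Gamma_{n-2}=\tilde P_n$, I would then verify conditions (1) and (2) of Definition \ref{chain} with $\mathcal C=\mathcal F$: the base group $\Gamma_1=\tilde P_3\cong\mathbb F_2$ lies in $\mathcal F$; the group $\Gamma_{n-2}=\tilde P_n$ is trivially commensurable to $\tilde P_n$ (take the identity); and each structural map $\tilde\pi_k$ (for $4\leqslant k\leqslant n$) has kernel $\cong\mathbb F_{k-1}$, which is a non-abelian free group since $k-1\geqslant 3$. Hence $\tilde P_n\in Quot_{n-2}(\mathcal F)$.

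For $\tilde B_n$ I would invoke the fact, recorded in Subsection \ref{braidgroups}, that $\tilde P_n<\tilde B_n$ is a finite index subgroup. This makes $\tilde B_n$ commensurable to $\Gamma_{n-2}=\tilde P_n$, so the very same chain, now read with the top group $\Gamma_{n-2}=\tilde P_n$ commensurable to $\Gamma=\tilde B_n$, again satisfies all the hypotheses of Definition \ref{chain}. This yields $\tilde B_n\in Quot_{n-2}(\mathcal F)$ as well, completing the corollary.

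The argument is essentially bookkeeping, so I do not expect a substantial obstacle; the auxiliary Lemmas \ref{quot} and \ref{prod} are not even needed here, since the statement concerns single groups rather than kernels of composites or direct products. The only points genuinely requiring care are that \emph{every} kernel appearing in the chain be non-abelian --- guaranteed because the smallest free kernel is $\mathbb F_3$ and the base is $\mathbb F_2$, none of them cyclic --- and that the commensurability clause of Definition \ref{chain} be available, which is the case for the top group once $n-2\geqslant 2$, i.e. $n\geqslant 4$; the remaining value $n=3$ is precisely the base of the induction, where the statement reduces to the facts that $\tilde P_3\cong\mathbb F_2$ is free and that $\tilde B_3$ is commensurable to it.
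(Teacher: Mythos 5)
Your proof is correct and is essentially the paper's own argument: the chain of strand-removal surjections $\tilde\pi_k:\tilde P_k\rightarrow\tilde P_{k-1}$ with kernels $\ker(\tilde\pi_k)\cong\mathbb F_{k-1}$ from the Birman exact sequence, the base $\tilde P_3\cong\mathbb F_2$, and the finite-index inclusion $\tilde P_n<\tilde B_n$ supplying the commensurability clause of Definition \ref{chain}. One caveat, which your write-up shares with the paper itself: for $n=3$ the definition gives $Quot_1(\mathcal F)=\mathcal F$ with no commensurability allowance, and $\tilde B_3\cong PSL(2,\mathbb Z)$ has torsion and hence is not free, so your closing appeal to ``$\tilde B_3$ is commensurable to $\mathbb F_2$'' is not licensed by the definition as literally stated (the argument is unproblematic for $\tilde P_3$ and for all $n\geqslant 4$).
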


\subsection{OE superrigidity for actions of central quotients of braid groups} 
We begin by recalling some terminology.

\begin{definition}\label{conj}
Two ergodic pmp actions $\Gamma\curvearrowright (X,\mu)$ and $\Lambda\curvearrowright (Y,\nu)$ are said to be
\begin{enumerate}
 \item {\it conjugate} if there exist a probability space isomorphism $\theta:(X,\mu)\rightarrow (Y,\nu)$ and a group isomorphism $\delta:\Gamma\rightarrow\Lambda$ such that $\theta(\g\cdot x)=\delta(\g)\cdot\theta(x)$, for all $\g\in\Gamma$ and almost every $x\in X$.
 \item {\it stably conjugate} if there exist finite index subgroups $\Gamma_0<\Gamma$, $\Lambda_0<\Lambda$, and finite normal subgroups $\G_1\unlhd \Gamma_0,\Lambda_1\unlhd\Lambda_0$ such that
 \begin{itemize}
 \item the action $\Gamma\curvearrowright (X,\mu)$ is induced from some pmp action $\Gamma_0\curvearrowright (X_0,\mu_0)$.
 \item the action $\Lambda\curvearrowright (Y,\nu)$ is induced from some pmp action $\Lambda_0\curvearrowright (Y_0,\nu_0)$.
 \item the actions $\Gamma_0/\G_1\curvearrowright (X_0,\mu_0)/\G_1$ and $\Lambda_0/\Lambda_1\curvearrowright (Y_0,\nu_0)/\Lambda_1$ are conjugate.

 \end{itemize} 
 \item {\it orbit equivalent} if there exists a probability space isomorphism $\theta:(X,\mu)\rightarrow (Y,\nu)$ such that $\theta(\Gamma\cdot x)=\Lambda\cdot\theta(x)$, for almost every $x\in X$.
 \end{enumerate} 
\end{definition}

Here, we say that an action $\Gamma\curvearrowright (X,\mu)$ is {\it induced} from an action $\Gamma_0\curvearrowright (X_0,\mu_0)$ of a finite index subgroup $\Gamma_0<\Gamma$ if $X_0\subset X$ is a  $\Gamma_0$-invariant Borel subset of positive measure such that $\mu(\g\cdot X_0\cap X_0)=0$, for all $\g\in\Gamma\setminus\Gamma_0$.

\begin{Remark} The  notion of stable conjugacy is taken from \cite[Section 6.2]{PV09}. It is easy to see that two actions are stably conjugate if and only if they are {\it virtually conjugate}, in the sense of  \cite[Definition 1.3]{Ki06}.
\end{Remark}

Next, we state the third named author's OE superrigidity theorem for actions of mapping class group \cite[Theorem 1.1]{Ki06}.
Let $R_{g, n}$ be a compact orientable surface of genus $g$ with $n$ boundary components.
We denote by $\Gamma(R_{g, n})$ the group of isotopy classes of homeomorphisms of $R_{g, n}$, where isotopy may move points of the boundary of $R_{g, n}$. We also denote $\kappa(R_{g,n})=3g+n-4$.

\begin{theorem}[Kida, \cite{Ki06}]\label{yoshi} Let $k\geqslant 1$, and for $1\leqslant i\leqslant k$, let $R_{g_i, n_i}$ be a compact orientable surface such that $\kappa(R_{g_i,n_i})>0$.
Let $\Gamma$ be a finite index subgroup of $\Gamma(R_{g_1,n_1})\times\cdots\times\Gamma(R_{g_k,n_k})$. 
Let $\Gamma\curvearrowright (X,\mu)$ be a free ergodic pmp action. Let $\Lambda\curvearrowright (Y,\nu)$ be any free ergodic pmp action that is orbit equivalent to $\Gamma\curvearrowright (X,\mu)$.

Then the actions $\Gamma\curvearrowright (X,\mu)$ and $\Lambda\curvearrowright (Y,\nu)$ are stably conjugate. Moreover, if the action $\Gamma\curvearrowright (X,\mu)$ is  aperiodic, then the actions $\Gamma\curvearrowright (X,\mu)$ and $\Lambda\curvearrowright (Y,\nu)$ are conjugate.
\end{theorem}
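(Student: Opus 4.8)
The plan is to reduce this orbit equivalence (OE) statement to the far stronger \emph{measure equivalence} (ME) rigidity enjoyed by mapping class groups, exploiting the principle that a group all of whose ME self-couplings are essentially trivial forces every OE to arise from a virtual conjugacy. Write $\Gamma$ for the given finite index subgroup of $\Gamma(R_{g_1,n_1})\times\cdots\times\Gamma(R_{g_k,n_k})$; the hypothesis $\kappa(R_{g_i,n_i})>0$ places each factor in the rigid, non-amenable regime where the action on the space of measured foliations is available and the boundary analysis makes sense.

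First I would package the data as a coupling. A probability space isomorphism $\theta:(X,\mu)\to(Y,\nu)$ with $\theta(\Gamma\cdot x)=\Lambda\cdot\theta(x)$ a.e.\ yields an orbit cocycle $\alpha:\Gamma\times X\to\Lambda$ and, through the standard Furman construction, an ergodic ME coupling $(\Omega,m)$ of $\Gamma$ and $\Lambda$ on $\Omega=X\times\Lambda$. The entire obstruction to conjugacy is concentrated in $\alpha$: once $\alpha$ is shown to be cohomologous to a homomorphism, routine untwisting converts the OE into a conjugacy, after the bookkeeping of finite-index subgroups and finite kernels that produces exactly the notion of stable conjugacy in Definition \ref{conj}. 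So the target is that $\alpha$ be cohomologous to a virtual isomorphism $\Gamma\to\Lambda$.

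The heart of the matter is the ME-rigidity input, which I would import from the structure theory of mapping class group couplings. Two facts are needed. The first is an identification theorem: any countable group $\Lambda$ admitting an ME coupling with $\Gamma$ is virtually isomorphic to $\Gamma$, and since each $\Gamma(R_{g_i,n_i})$ is ME-indecomposable the product decomposition is respected up to permutation of factors and passage to finite index. The second, decisive fact is \emph{self-coupling rigidity}: every ergodic self-coupling of the mapping class group is, up to finite-index and finite-kernel corrections, the coupling induced by the group multiplication. This is proved by constructing from the coupling a measurable, equivariant map into the Thurston boundary (the space of measured foliations, or the curve complex) and invoking the rigidity of stabilizers of that action---that stabilizers of filling systems are finite, and that reducible versus pseudo-Anosov behaviour is detected geometrically. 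Feeding $(\Omega,m)$ into this machinery forces $\alpha$, after passing to finite-index subgroups $\Gamma_0<\Gamma$, $\Lambda_0<\Lambda$ and dividing by finite normal subgroups $\Gamma_1\unlhd\Gamma_0$, $\Lambda_1\unlhd\Lambda_0$, to be cohomologous to a genuine group isomorphism $\delta:\Gamma_0/\Gamma_1\to\Lambda_0/\Lambda_1$.

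Finally I would assemble the conclusion. The cohomology relation between $\alpha$ and $\delta$ on the reduced groups is precisely the assertion that $\theta$ implements a conjugacy of the quotient actions $\Gamma_0/\Gamma_1\curvearrowright(X_0,\mu_0)/\Gamma_1$ and $\Lambda_0/\Lambda_1\curvearrowright(Y_0,\nu_0)/\Lambda_1$; unwinding the induction of $\Gamma\curvearrowright X$ and $\Lambda\curvearrowright Y$ from their finite-index subactions then gives stable conjugacy in the exact sense required. When $\Gamma\curvearrowright X$ is aperiodic, the restriction to $\Gamma_0$ remains ergodic and no genuine induction occurs, so the finite-index reductions collapse and one obtains an honest conjugacy. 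The main obstacle is entirely the geometric self-coupling rigidity: producing the equivariant boundary map and showing the stabilizers are small enough to untwist the cocycle is where the full strength of the curve-complex and measured-foliation analysis for mapping class groups is unavoidable, while everything surrounding it is soft cocycle bookkeeping.
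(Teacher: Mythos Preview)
This theorem is not proved in the paper: it is stated in the preliminaries as a direct quotation of \cite[Theorem~1.1]{Ki06}, and the paper simply invokes it as a black box (together with Remark~\ref{map}) when deducing Theorem~\ref{A} from Theorem~\ref{B}. So there is no ``paper's own proof'' to compare against.

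That said, your sketch is a faithful outline of how the result is actually established in \cite{Ki06} and the companion work of Kida on measure equivalence rigidity for mapping class groups: one passes from the orbit equivalence to an ME self-coupling via Furman's construction, invokes the coupling rigidity for mapping class groups (built on the action on the curve complex / space of measured foliations and Ivanov-type rigidity of the automorphism group), and then performs the standard cocycle-untwisting to land on a stable conjugacy, with aperiodicity collapsing the finite-index bookkeeping. The only caveat is that the ``self-coupling rigidity'' step is itself a substantial theorem whose proof you are not reproducing here; but as a high-level roadmap of the argument in \cite{Ki06}, your proposal is accurate.
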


\begin{Remark}\label{map}
Let $S_{g, n}$ be an orientable surface of genus $g$ with $n$ punctures.
We denote by $\mod(S_{g, n})$ the \textit{mapping class group} of $S_{g, n}$, i.e., the group of isotopy classes of orientation-preserving homeomorphisms of $S_{g, n}$.
For every $n\geq 3$, the group $\tilde B_n$ is isomorphic to an index $n$ subgroup of $\mod(S_{0, n+1})$ \cite[Section 9.2]{FM11}.

As precisely discussed in \cite[Section 5.1]{Iv02}, the groups $\mod$$(S_{g, n})$ and $\Gamma(R_{g, n})$ are isomorphic.
In conclusion, Theorem \ref{yoshi} applies to $\tilde B_n$ and $\tilde P_n$, and moreover to finite index subgroups of direct products of $\tilde B_n$'s and $\tilde P_n$'s, for every $n\geqslant 4$. 
\end{Remark}

\begin{Remark}\label{stable}
The proof of \cite[Theorem 1.1]{Ki06} moreover shows that any orbit equivalence $\theta:X\rightarrow Y$ between two actions  $\Gamma\curvearrowright (X,\mu)$ and $\Lambda\curvearrowright (Y,\nu)$ as in Theorem \ref{yoshi} arises in a canonical way from a stable conjugacy between  them.
\end{Remark}

\section{Uniqueness of Cartan subalgebras}
The main goal of this section is to prove the following theorem and its corollary:

\begin{theorem}\label{Cartan}Let $\G \in Quot_n(\Cal C_{rss})$, for some $n\geqslant 1$. Let $\G\curvearrowright (B,\tau)$ be a trace preserving action on a tracial  von Neumann algebra $(B,\tau)$. Denote $M=B\rtimes\Gamma$ and let $p\in M$ be a projection.
Let $A\subset pMp$ be a masa and denote $P=\mathcal N_{pMp}(A)''$. 

If  the inclusion $P\subset pMp$ has finite index, then $A\prec_{M}B$.
\end{theorem}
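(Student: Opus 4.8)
The plan is to prove the statement by induction on the integer $n$ witnessing $\Gamma\in Quot_n(\mathcal C_{rss})$, the single inductive claim being exactly: if $A\subset pMp$ is a masa whose normalizer $P=\mathcal N_{pMp}(A)''$ has finite index in $pMp$, then $A\prec_M B$. The observation that makes the Popa--Vaes machinery available is that $A$, being abelian and hence amenable, is automatically amenable relative to $B$ inside $M$. For the base case $n=1$ we have $\Gamma\in\mathcal C_{rss}$, so $\Gamma$ is non-amenable and Theorem \ref{pv} applies to $M=B\rtimes\Gamma$: either $A\prec_M B$, and we are done, or $P$ is amenable relative to $B$. In the latter case, since $P\subset pMp$ has finite index, relative amenability propagates to the finite-index extension, so $pMp$, and thus $M$, is amenable relative to $B$; but $B\rtimes\Gamma$ being amenable relative to $B$ forces $\Gamma$ amenable, contradicting $\Gamma\in\mathcal C_{rss}$. (The two standard facts used here, that relative amenability passes to finite-index extensions and that $B\rtimes\Gamma$ amenable relative to $B$ implies $\Gamma$ amenable, are consequences of the definitions, cf.\ \cite{OP07,PV11}.)

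For the inductive step, assume the claim for $Quot_{n-1}(\mathcal C_{rss})$. Since membership in $Quot_n$ is a commensurability invariant, I first reduce — via the finite-index transfer described at the end — to the case where $\Gamma=\Gamma_n$ carries the defining chain $\Gamma_n\xrightarrow{\pi_n}\cdots\to\Gamma_1$ with $\Gamma_1\in\mathcal C_{rss}$ and all $\ker\pi_k\in\mathcal C_{rss}$. Put $K=\ker(\pi_2\circ\cdots\circ\pi_n)$, so $\Gamma_n/K\cong\Gamma_1\in\mathcal C_{rss}$ and, by Lemma \ref{quot}, $K\in Quot_{n-1}(\mathcal C_{rss})$. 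Set $N=B\rtimes K$. As $K\lhd\Gamma$ is normal, $N$ is regular in $M$ and $M$ is a crossed product of $N$ by the quotient $\Gamma_1\in\mathcal C_{rss}$ (a genuine crossed product when the extension splits, as it does for the braid groups). Applying Theorem \ref{pv} to this decomposition with the masa $A$ (amenable relative to $N$) and ruling out, exactly as in the base case, that $P$ is amenable relative to $N$ (which would force $\Gamma_1$ amenable), we obtain $A\prec_M N$.

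It remains to push the intertwining from $N$ down to $B$. By Theorem \ref{corner}, $A\prec_M N$ yields a partial isometry $v$ with $v^*v\in A$ and $e:=vv^*\in N$, and a $*$-isomorphic corner $D:=\operatorname{Ad}(v)(A)\subseteq eNe$; since $A$ is maximal abelian in $pMp$ one checks $D'\cap eMe=D$, so $D$ is actually a masa of $eNe$. Conjugating gives a finite-index subalgebra $vPv^*\subseteq eMe$ normalizing $D$. The heart of the matter is to transfer this finite-index normalizer into $eNe$: for $r\in\mathcal N_{eMe}(D)$ with $\phi_r:=\operatorname{Ad}(r)|_D$, left/right $N$-modularity of $E_N$ gives $E_N(r)\,d=\phi_r(d)\,E_N(r)$ for all $d\in D$, whence $E_N(r)^*E_N(r)\in D'\cap eNe=D$ and the polar part of $E_N(r)$ is a groupoid normalizer of $D$ in $eNe$. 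Therefore $P_0:=\langle E_N(vPv^*)\rangle\subseteq\mathcal N_{eNe}(D)''$, while Lemma \ref{findex} shows $P_0$ has finite index in $eNe$. Because for a masa the groupoid normalizer generates the same algebra as the normalizer, $\mathcal N_{eNe}(D)''$ has finite index in $eNe$. Now $D$ is a masa in a corner of $N=B\rtimes K$ with $K\in Quot_{n-1}(\mathcal C_{rss})$ and finite-index normalizer, so the inductive hypothesis gives $D\prec_N B$, and transitivity of intertwining yields $A\prec_M D\prec_M B$.

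The same relocation argument, applied through a finite-index inclusion $B\rtimes\Gamma_0\subset B\rtimes\Gamma$ (using Lemma \ref{ramen} to get $A\prec_M(B\rtimes\Gamma_0)$ and then Lemma \ref{findex}), establishes the commensurability invariance invoked at the start of the inductive step, letting one pass between $\Gamma$ and the chain group $\Gamma_n$. I expect the main obstacle to be precisely this push-down, namely verifying that the image masa $D$ inherits a finite-index normalizer inside the regular subalgebra $N$; this is exactly where the interplay between Popa's intertwining, the groupoid-normalizer description of $\mathcal N_{eNe}(D)''$, and the finite-index estimate of Lemma \ref{findex} is indispensable.
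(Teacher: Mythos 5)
Your overall skeleton --- induction on $n$, the Popa--Vaes dichotomy combined with the finite-index hypothesis to rule out relative amenability of $P$, and a push-down of the masa into $B\rtimes K$ followed by the inductive hypothesis --- is the same as the paper's. However, there are two genuine gaps. The first, and more serious, is in how you invoke Theorem \ref{pv} at the inductive step. You need $M$ to be a \emph{crossed product} of $N=B\rtimes K$ by the quotient group $\Gamma_1\in\mathcal C_{rss}$, and you concede parenthetically that this requires the extension $1\to K\to\Gamma\to\Gamma_1\to 1$ to split. The theorem is stated for arbitrary groups in $Quot_n(\mathcal C_{rss})$, where no splitting is available (and the paper applies it, in the proof of Theorem \ref{C}, to extensions of non-elementary hyperbolic groups by free products, which certainly need not split); in the non-split case $M$ is only a cocycle crossed product of $N$, to which Theorem \ref{pv} as stated does not apply. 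The paper circumvents this with the embedding $\theta:M\to M\bar\otimes L\Lambda$, $\theta(bu_\g)=bu_\g\otimes v_{\delta(\g)}$, of Notation \ref{comu}: the target $M\bar\otimes L\Lambda$ \emph{is} a genuine crossed product $M\rtimes\Lambda$ for the trivial action, Theorem \ref{pv} applies there, and Propositions \ref{prop1} and \ref{prop2} translate the two alternatives back into $A\prec_M B\rtimes\ker\delta$ and amenability of $\Lambda$, respectively. Without this device (or a cocycle version of \cite{PV11}), your inductive step does not go through in the stated generality.

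The second gap is in the push-down. From $A\prec_M N$ and Theorem \ref{corner} you obtain $v\in qMp$ with $\theta(x)v=vx$ for $x\in Ap$; then indeed $v^*v\in Ap$, but $vv^*$ lies in $\theta(Ap)'\cap qMq$ and \emph{not} in $N$, so your claims that $e:=vv^*\in N$ and that $D:=vAv^*\subseteq eNe$ are unjustified: $v(Ap)v^*=\theta(Ap)vv^*$ sits in $vv^*Mvv^*$, not in a corner of $N$. The correct masa to work with is $\theta(Ap)\subset qNq$ itself, and to retain control of the index one must cut by a spectral projection $q_t=1_{[t,\infty)}(E_N(vv^*))$ before applying the finite-index estimate --- this is exactly what Proposition \ref{masa} together with Lemma \ref{findex} accomplishes. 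Your $E_N$-modularity computation for pushing normalizers into $N$ is essentially the right idea (it is the content of the paper's use of \cite[Lemma 2.1]{JP81} inside Proposition \ref{masa}), but it has to be run on $\theta(Ap)$ rather than on $vAv^*$.
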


\begin{cor}\label{cartan}
Let $\G \in Quot_n(\Cal C_{rss})$, for some $n\geqslant 1$. Let $\Gamma\curvearrowright (X,\mu)$ be a free ergodic pmp action and denote $M=L^{\infty}(X)\rtimes\Gamma$.
If $A\subset M$ is a Cartan subalgebra, then we can find a unitary $u\in M$ such that $A=uL^{\infty}(X)u^*$.
\end{cor}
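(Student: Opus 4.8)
The plan is to obtain the Corollary as an essentially immediate consequence of Theorem \ref{Cartan}, the only real work being a standard upgrade of an intertwining relation to a genuine unitary conjugacy of Cartan subalgebras. I would proceed in two steps: first apply Theorem \ref{Cartan} to produce $A\prec_{M}L^{\infty}(X)$, and then invoke Popa's conjugacy criterion for Cartan subalgebras to turn this into a unitary $u$ with $uL^{\infty}(X)u^{*}=A$.

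For the first step, I would apply Theorem \ref{Cartan} with the trace preserving action $\Gamma\curvearrowright(B,\tau)$ taken to be $\Gamma\curvearrowright\bigl(L^{\infty}(X),\int_{X}\cdot\,d\mu\bigr)$, so that $M=L^{\infty}(X)\rtimes\Gamma=B\rtimes\Gamma$ with $\Gamma\in Quot_{n}(\mathcal{C}_{rss})$, and with projection $p=1$. The hypotheses are readily checked: a Cartan subalgebra is by definition a masa, so $A\subset M$ qualifies as the masa in Theorem \ref{Cartan}; and since $A$ is regular we have $P:=\mathcal{N}_{M}(A)''=M$, whence the inclusion $P=M\subset M$ trivially has (finite) index one. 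Theorem \ref{Cartan} then yields $A\prec_{M}L^{\infty}(X)$.

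For the second step, observe that both $A$ and $L^{\infty}(X)$ are Cartan subalgebras of the II$_1$ factor $M$ --- the latter because the action $\Gamma\curvearrowright(X,\mu)$ is free and ergodic --- and that we have just established $A\prec_{M}L^{\infty}(X)$. I would then invoke the standard principle that two Cartan subalgebras of a II$_1$ factor which intertwine into one another are unitarily conjugate: the partial isometry $v$ and $*$-homomorphism $\theta$ produced by Theorem \ref{corner} get spread out over all of $M$ using the normalizing unitaries of the regular algebra $A$, while the maximal abelianness of $L^{\infty}(X)$ forces the resulting intertwiner to be a genuine unitary $u\in M$ with $uL^{\infty}(X)u^{*}=A$, which is exactly the assertion. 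This is a routine application of Popa's intertwining-by-bimodules technique \cite{Po03} and is the only non-formal ingredient at the level of the Corollary; the substantive difficulty is concentrated entirely in Theorem \ref{Cartan}, whose proof must deploy the Popa--Vaes dichotomy (Theorem \ref{pv}) together with an induction along the $Quot_{n}(\mathcal{C}_{rss})$ filtration.
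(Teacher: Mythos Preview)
Your proposal is correct and follows essentially the same two-step approach as the paper: apply Theorem \ref{Cartan} with $B=L^{\infty}(X)$ and $p=1$ to get $A\prec_{M}L^{\infty}(X)$, then upgrade to unitary conjugacy via Popa's criterion for Cartan subalgebras. The only minor discrepancy is the citation: the paper invokes \cite[Theorem A.1]{Po01} rather than \cite{Po03} for the upgrade step, but the substance is the same.
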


In preparation for the proof of Theorem \ref{Cartan}, we introduce some notation.

\begin{notation}\label{comu} In the setting from Theorem \ref{Cartan}, assume that we are given a countable group $\La$ and a homomophism $\delta:\G\ra \La$.  We denote by $\{u_\g\}_{\g\in\Gamma}\subset L\Gamma$ and $\{v_h\}_{h\in\Lambda}\subset L\Lambda$ the canonical unitaries.
Then we have a $*$-homomorphism $\theta:M\rightarrow M\bar{\otimes}L\Lambda$ given by 
 \begin{equation*}
\theta(bu_\g)=bu_\g\otimes v_{\delta(\g)},\;\;\;\text{for all $b\in B$ and $\g\in\Gamma$}.
\end{equation*} 
\end{notation}

Next, we establish two results about $\theta$ that we will use in the proof of Theorem \ref{cartan}.

\begin{prop}\label{prop1} In the setting from \ref{comu}, assume that $\delta$ is surjective. Let $P\subset M$ be a von Neumann subalgebra and $\Sigma<\Lambda$ a subgroup. 
If $\theta(P)\prec_{M\bar{\otimes}L\Lambda}M\bar{\otimes}L\Sigma$, then $P\prec_{M}B\rtimes\delta^{-1}(\Sigma)$.\end{prop}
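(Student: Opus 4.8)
The plan is to prove the contrapositive using the sequential form of Popa's criterion (Theorem~\ref{corner}(2)): assuming $P\nprec_M B\rtimes\delta^{-1}(\Sigma)$, I will produce a sequence of unitaries witnessing $\theta(P)\nprec_{M\bar{\otimes}L\Lambda}M\bar{\otimes}L\Sigma$, contradicting the hypothesis. Two preliminary observations drive everything. First, $\theta$ is a trace-preserving $*$-homomorphism, so $\|\theta(z)\|_2=\|z\|_2$ for all $z\in M$. Second, writing $\Sigma_0=\delta^{-1}(\Sigma)$, a direct check on elements of the form $bu_\gamma$ (using that $E_{M\bar{\otimes}L\Sigma}$ retains exactly the $\Lambda$-Fourier modes lying in $\Sigma$, while $E_{B\rtimes\Sigma_0}$ retains exactly the $\Gamma$-modes $\gamma$ with $\delta(\gamma)\in\Sigma$) gives the intertwining identity
\[
E_{M\bar{\otimes}L\Sigma}(\theta(z))=\theta\big(E_{B\rtimes\Sigma_0}(z)\big),\qquad z\in M,
\]
which extends to all of $M$ by normality.

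So let $u_n\in\mathcal U(P)$ satisfy $\|E_{B\rtimes\Sigma_0}(xu_ny)\|_2\to0$ for all $x,y\in M$; I claim the unitaries $\theta(u_n)\in\mathcal U(\theta(P))$ satisfy $\|E_{M\bar{\otimes}L\Sigma}(X\theta(u_n)Y)\|_2\to0$ for all $X,Y\in M\bar{\otimes}L\Lambda$. Since the $\theta(u_n)$ have uniformly bounded operator norm, a routine approximation argument reduces this to $X,Y$ ranging over the $\|\cdot\|_2$-total bounded set of elementary tensors, i.e.\ $X=x\otimes v_g$ and $Y=y\otimes v_h$ with $x,y\in M$ and $g,h\in\Lambda$. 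Here is where surjectivity of $\delta$ enters: choose $s,t\in\Gamma$ with $\delta(s)=g$ and $\delta(t)=h$. Then $1\otimes v_g=(u_s^*\otimes1)\theta(u_s)$ and $1\otimes v_h=\theta(u_t)(u_t^*\otimes1)$, which lets me absorb the $L\Lambda$-legs into $\theta$:
\[
X=(xu_s^*\otimes1)\,\theta(u_s),\qquad Y=\theta(u_t)\,(u_t^*y\otimes1).
\]

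With this factorization, $X\theta(u_n)Y=(xu_s^*\otimes1)\,\theta(u_su_nu_t)\,(u_t^*y\otimes1)$, and since $xu_s^*\otimes1$ and $u_t^*y\otimes1$ lie in $M\otimes1\subset M\bar{\otimes}L\Sigma$, bimodularity of the conditional expectation together with the identity above yields $E_{M\bar{\otimes}L\Sigma}(X\theta(u_n)Y)=(xu_s^*\otimes1)\,\theta\big(E_{B\rtimes\Sigma_0}(u_su_nu_t)\big)\,(u_t^*y\otimes1)$. Using $\|aZb\|_2\le\|a\|\,\|b\|\,\|Z\|_2$ and the trace-preservation of $\theta$, I obtain
\[
\|E_{M\bar{\otimes}L\Sigma}(X\theta(u_n)Y)\|_2\le \|x\|\,\|y\|\;\|E_{B\rtimes\Sigma_0}(u_su_nu_t)\|_2\longrightarrow 0,
\]
the convergence coming from the defining property of $(u_n)$ applied with the fixed elements $u_s,u_t\in M$. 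This yields the desired contradiction, so $P\prec_M B\rtimes\delta^{-1}(\Sigma)$.

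The main obstacle is conceptual rather than computational: one must locate the right factorization of the elementary tensors, and it is precisely the surjectivity of $\delta$ that makes $g,h$ attainable as $\delta(s),\delta(t)$, so that the stray unitaries $v_g,v_h$ can be rewritten through $\theta$ and then collapsed against $\theta(u_n)$ via the homomorphism property. The only other point requiring care is the bounded-total-set reduction underlying the passage from elementary tensors to arbitrary $X,Y$, which is standard given the uniform operator-norm bound on $\theta(u_n)$.
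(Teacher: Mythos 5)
Your proof is correct and takes essentially the same route as the paper's: both argue through the sequential form of Popa's criterion (Theorem \ref{corner}), reduce to elementary tensors by a bounded $\|\cdot\|_2$-approximation, and use surjectivity of $\delta$ to lift the unitaries $v_g,v_h$ to $u_s,u_t$ which are absorbed into $\theta$. The only difference is organizational: where the paper verifies the key identity $\|E_{M\bar{\otimes}L\Sigma}((1\otimes v_{\delta(s)})\theta(u_n)(1\otimes v_{\delta(t)}))\|_2=\|E_{B\rtimes\delta^{-1}(\Sigma)}(u_su_nu_t)\|_2$ by an explicit Fourier-coefficient computation, you derive it structurally from the commutation $E_{M\bar{\otimes}L\Sigma}\circ\theta=\theta\circ E_{B\rtimes\delta^{-1}(\Sigma)}$ together with $M\otimes 1$-bimodularity and trace-preservation of $\theta$.
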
 

\begin{proof} Assume by contradiction that $P\nprec_{M}B\rtimes\delta^{-1}(\Sigma)$. Then by Theorem \ref{corner} we can find a sequence of unitaries $u_n\in P$ such that \begin{equation}\label{u_n}\|E_{B\rtimes\delta^{-1}(\Sigma)}(au_nb)\|_2\rightarrow 0,\;\;\;\text{for all $a,b\in M$}.\end{equation}

We claim that \begin{equation}\label{weakly} \|E_{M\bar{\otimes}L\Sigma}(x\theta(u_n)y)\|_2\rightarrow 0,\;\;\;\text{for all $x,y\in M\bar{\otimes}L\Lambda$}.\end{equation}

By using approximations in $\|\cdot\|_2$ it suffices to prove \ref{weakly} whenever $x=1\otimes v_{\la_1}$ and $y=1\otimes v_{\la_2}$, for some $\la_1,\la_2\in\Lambda$. Let $\g_1,\g_2\in\Gamma$ such that $\la_1=\delta(\g_1)$ and $\la_2=\delta(\g_2)$.

For every $n$, we decompose $u_n=\sum_{\g\in\Gamma}x_n^\g u_\g$, where $x_n^\g\in B$. Then we have that
$E_{M\bar{\otimes} L\Sigma}((1\otimes v_{\la_1})\theta(u_n)(1\otimes v_{\la_2}))=\sum_{\g\in\Gamma, \;\delta(\g_1\g\g_2)\in\Sigma}x_n^\g u_\g\otimes v_{\delta(\g_1\g\g_2)}. $

This further implies that \begin{equation}\label{E_M}\|E_{M\bar{\otimes} L\Sigma}((1\otimes v_{\la_1})\theta(u_n)(1\otimes v_{\la_2}))\|_2^2=\sum_{\g\in \g_1^{-1}\delta^{-1}(\Sigma)\g_2^{-1}}\|x_n^\g\|_2^2= \end{equation} $$\|E_{B\rtimes\delta^{-1}(\Sigma)}(u_{\g_1}u_nu_{\g_2})\|_2^2.$$

By combining \ref{u_n} and \ref{E_M}, we conclude that $\|E_{M\bar{\otimes} L\Sigma}((1\otimes v_{\la_1})\theta(u_n)(1\otimes v_{\la_2}))\|_2\rightarrow 0$, which proves claim \ref{weakly}. Following Theorem \ref{corner}, this contradicts the assumption that $\theta(P)\prec_{M\bar{\otimes}L\Lambda}M\bar{\otimes}L\Sigma$.
\end{proof}

\begin{prop}\label{prop2}
In the setting from \ref{comu}, let $p\in \theta(M)$ be a non-zero projection. If $p\theta(M)p$ is amenable relative to $M\otimes 1$ inside $M\bar{\otimes}L\Lambda$, then $\delta(\Gamma)$ is amenable. 
\end{prop}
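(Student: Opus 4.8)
The plan is to realize the basic construction concretely and then extract from the relative amenability a left $\delta(\G)$-invariant mean. Write $\tilde M:=M\bar{\otimes}L\Lambda$ and $Q:=M\otimes 1$. Since $Q$ is the first tensor factor, the commutation theorem for the basic construction gives $\langle\tilde M,e_Q\rangle=(J_{\tilde M}QJ_{\tilde M})'=(M'\otimes\mathbb C 1)'=M\bar{\otimes}\mathbb{B}(\ell^2\Lambda)$, with $e_Q=1\otimes e_{\mathbb C}$ the projection onto $\mathbb C\delta_e\subset\ell^2\Lambda$ and canonical semifinite trace restricting to $\tau\otimes\tau$ on $\tilde M$. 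Inside it I single out the diagonal masa $1\otimes\ell^\infty(\Lambda)$, writing $m_f$ for the multiplication operator of $f\in\ell^\infty(\Lambda)$. Two facts drive the proof: conjugation by $1\otimes v_h$ sends $m_f$ to $m_{h\cdot f}$ (left translation, $(h\cdot f)(g)=f(h^{-1}g)$), and, crucially, $\theta(u_\gamma)(1\otimes m_f)\theta(u_\gamma)^*=u_\gamma u_\gamma^*\otimes v_{\delta(\gamma)}m_fv_{\delta(\gamma)}^*=1\otimes m_{\delta(\gamma)\cdot f}$.

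By hypothesis there is a $p\theta(M)p$-central state $\phi$ on $p\big(M\bar{\otimes}\mathbb{B}(\ell^2\Lambda)\big)p$ with $\phi=\tau\otimes\tau$ on $p\tilde Mp$. If $p$ were $1$ I would simply set $\Psi(f)=\phi(1\otimes m_f)$: this is a state on $\ell^\infty(\Lambda)$, and the displayed conjugation identity together with centrality gives $\Psi(\delta(\gamma)\cdot f)=\phi\big(\theta(u_\gamma)(1\otimes m_f)\theta(u_\gamma)^*\big)=\phi(1\otimes m_f)=\Psi(f)$, so $\Psi$ is a left $\delta(\G)$-invariant mean on $\ell^\infty(\Lambda)$. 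Thus the whole content is to make this work for a general projection $p$.

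The main obstacle is exactly that the unitaries implementing the translations are $\theta(u_\gamma)$, which need not lie in the corner $p\theta(M)p=\theta(eMe)$ (where $p=\theta(e)$), so centrality cannot be applied to them. To circumvent this I would pass to the central support. Note $\theta$ is a trace-preserving embedding, so $\mathcal Z(\theta(M))=\theta(\mathcal Z(M))$ and the central support of $\theta(e)$ in $\theta(M)$ has the form $z=\theta(z_0)$ with $z_0\in\mathcal Z(M)$ a projection. By the standard stability properties of relative amenability (see \cite{OP07}), amenability of $p\theta(M)p$ relative to $Q$ implies amenability of $z\theta(M)z$ relative to $Q$; let $\phi_z$ be the corresponding $z\theta(M)z$-central state on $z\big(M\bar{\otimes}\mathbb{B}(\ell^2\Lambda)\big)z$, tracial on $z\tilde Mz$. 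Because $z_0$ is central, $u_\gamma z_0$ is a unitary of $z_0Mz_0$, so $w:=\theta(u_\gamma)z=\theta(u_\gamma z_0)$ is a genuine unitary of the corner $z\theta(M)z$ (indeed $w^*w=ww^*=z$), and since $z$ commutes with $\theta(u_\gamma)$ one gets $w\big(z(1\otimes m_f)z\big)w^*=z(1\otimes m_{\delta(\gamma)\cdot f})z$. Defining $\Psi(f)=\phi_z\big(z(1\otimes m_f)z\big)$, centrality yields $\Psi(\delta(\gamma)\cdot f)=\phi_z\big(w[z(1\otimes m_f)z]w^*\big)=\phi_z\big([z(1\otimes m_f)z]w^*w\big)=\Psi(f)$, and $\Psi(1)=\phi_z(z)=(\tau\otimes\tau)(z)=\tau(z_0)>0$. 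Normalizing, $\Psi$ is a left $\delta(\G)$-invariant mean on $\ell^\infty(\Lambda)$.

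Finally I would transport this mean from $\ell^\infty(\Lambda)$ to $\ell^\infty(\Lambda_0)$, where $\Lambda_0:=\delta(\G)$. Choosing representatives $\{g_i\}$ of the right cosets $\Lambda_0\backslash\Lambda$, the map $\ell^\infty(\Lambda_0)\to\ell^\infty(\Lambda)$, $f\mapsto\tilde f$ with $\tilde f(hg_i)=f(h)$, is unital, positive and $\Lambda_0$-equivariant for left translation (since $(\lambda\cdot\tilde f)(hg_i)=f(\lambda^{-1}h)=\widetilde{(\lambda\cdot f)}(hg_i)$ for $\lambda\in\Lambda_0$). Composing $\Psi$ with this embedding produces a left-invariant mean on $\ell^\infty(\Lambda_0)$, whence $\Lambda_0=\delta(\G)$ is amenable. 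The only nonroutine ingredient is the central-support reduction in the third paragraph; everything else is the direct mean-building computation above.
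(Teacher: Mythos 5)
Your argument is correct and follows essentially the same route as the paper's proof: both identify $\langle M\bar{\otimes}L\Lambda,e_{M\otimes 1}\rangle$ with $M\bar{\otimes}\mathbb B(\ell^2\Lambda)$, observe that conjugation by $\theta(u_\gamma)$ implements $\mathrm{Ad}(v_{\delta(\gamma)})$ on the second tensor leg, and use centrality of the state witnessing relative amenability to extract a $\delta(\Gamma)$-invariant mean. The only (immaterial) differences are in how the corner $p$ is removed --- you amplify to the central support of $p$ in $\theta(M)$, while the paper invokes \cite[Remark 2.2]{Io12a} to replace $p$ by a projection in $\theta(M)'\cap(M\bar{\otimes}L\Lambda)$ --- and that the paper keeps the invariant state on all of $\mathbb B(\ell^2\Lambda)$ rather than restricting to the diagonal copy of $\ell^\infty(\Lambda)$.
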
 

\begin{proof} Assume that $p\theta(M)p$ is amenable relative to $M\otimes 1$. Then we can find a non-zero projection $q\in \theta(M)'\cap (M\bar{\otimes}L\Lambda)$ such that $\theta(M)q$ is amenable relative to $M\otimes 1$ inside $M\bar{\otimes}L\Lambda$ (see e.g. \cite[Remark 2.2.]{Io12a}). Therefore, there exists a $\theta(M)q$-central state $\Psi:q\langle M\bar{\otimes}L\Lambda,e_{M\otimes 1}\rangle q\rightarrow\mathbb C$ such that ${\Psi}_{|q(M\bar{\otimes}L\Lambda)q}=\tau$.

Let $\pi: M\bar{\otimes}\mathbb B(\ell^2\Lambda)\rightarrow \langle M\bar{\otimes}L\Lambda,e_{M\otimes 1}\rangle$ be the obvious $*$-isomorphism.
We define $\Phi:\mathbb B(\ell^2\Lambda)\rightarrow\mathbb C$ by letting $\Phi(T)=\Psi(q\pi(1\otimes T)q)$. Fixing $T\in\mathbb B(\ell^2\Lambda)$ and $\g\in\Gamma$ we have that  $\pi(1\otimes v_{\delta(\g)}Tv_{\delta(\g)}^*)=(u_\g\otimes v_{\delta(\g)})\pi(1\otimes T)(u_\g\otimes v_{\delta(\g)})^*$. Since $(u_\g\otimes v_{\delta(\g)})q\in \theta(M)q$ and $\Psi$ is $\theta(M)q$-central we get that

 \begin{equation}\label{regular} \Phi(v_{\delta(\g)}Tv_{\delta(\g)}^*)=\Psi(((u_\g\otimes v_{\delta(\g)})q)\pi(1\otimes T)((u_\g\otimes v_{\delta(\g)})q)^*)=\Phi(T) \end{equation}
 
 Since $\Phi(1)=\tau(q)>0$ then the identity \ref{regular} implies that $\tau(q)^{-1}\Phi:\mathbb B(\ell^2\Lambda)\rightarrow\mathbb C$ is a $\{v_{\delta(\g)}\}_{\g\in\Gamma}$-central state. This clearly implies that $\delta(\Gamma)$ is amenable.
\end{proof} 

In the proof of Theorem \ref{Cartan} we will also need the following technical result:

\begin{prop}\label{masa}
Let $(M,\tau)$ be a tracial von Neumann algebra, $A\subset M$  a masa, and $N\subset M$ a von Neumann subalgebra. 
Denote $P=\mathcal N_{M}(A)''$. Assume that $P\subset M$ has finite index and that $A\prec_{M}N$.

Then we can find a non-zero projection $q\in N$ and a masa $A_0\subset qNq$ such that $P_0\subset qNq$ has finite index, where $P_0=\mathcal N_{qNq}(A_0)''$.  Moreover, we can find non-zero projections $p_0\in A$, $q'\in A_0'\cap qMq$, and a unitary $u\in M$ such that $u(Ap_0)u^*=A_0q'$.
\end{prop}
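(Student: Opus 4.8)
The plan is to combine Popa's intertwining criterion (Theorem \ref{corner}) with the finiteness of $M$ and the index-transfer Lemma \ref{findex}. First I would unpack the hypothesis $A\prec_M N$: by Theorem \ref{corner} there are projections $p\in A$, $q\in N$, a $*$-homomorphism $\theta\colon pAp\to qNq$ and a non-zero partial isometry $v\in qMp$ with $\theta(x)v=vx$ for all $x\in pAp=Ap$. Taking adjoints gives $v^*\theta(x)=xv^*$, whence $v^*v$ commutes with $Ap$; since $A$ is a masa, $(Ap)'\cap pMp=Ap$, so $v^*v$ is a projection in $A$. Replacing $p$ by this projection I may assume $v^*v=p_0\in A$ and that $\theta$ is injective on $Ap_0$. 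Writing $q_2:=vv^*\leqslant\theta(p_0)$, the map $x\mapsto vxv^*$ is a $*$-isomorphism of $p_0Mp_0$ onto $q_2Mq_2$ carrying $Ap_0$ onto $\theta(Ap_0)q_2$; because $Ap_0$ is a masa in $p_0Mp_0$ (again as $A$ is a masa), its image $\tilde A:=\theta(Ap_0)q_2$ is a masa in $q_2Mq_2$. I then let $A_0\subset qNq$ be a masa containing the abelian algebra $A_1:=\theta(Ap_0)\subset qNq$.

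For the ``moreover'' (spatial) part I would upgrade $v$ to a genuine unitary. Since $M$ is finite and $v^*v=p_0\sim q_2=vv^*$, comparison theory gives $1-p_0\sim 1-q_2$, so $v$ extends to a unitary $u\in M$ with $up_0=v$. Then $u(Ap_0)u^*=v(Ap_0)v^*=A_1q_2$. Using that $\tilde A=A_1q_2$ is a masa in $q_2Mq_2$ one checks $q_2\,(A_1'\cap qNq)\,q_2\subset A_1q_2$; arranging the masa $A_0$ so that $q_2$ commutes with it (and $A_0q_2=A_1q_2$), I set $q':=q_2\in A_0'\cap qMq$ and obtain $u(Ap_0)u^*=A_0q'$, as required.

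It remains to prove the finite-index assertion, which is where Lemma \ref{findex} enters. Because $p_0\in A\subset P=\mathcal N_M(A)''$ and $P\subset M$ has finite index, the compression $p_0Pp_0\subset p_0Mp_0$ has finite index, and $p_0Pp_0\subset\mathcal N_{p_0Mp_0}(Ap_0)''$; hence $\mathcal N_{p_0Mp_0}(Ap_0)''$ has finite index in $p_0Mp_0$. Transporting through the $*$-isomorphism $x\mapsto vxv^*$, the normalizer $R:=\mathcal N_{q_2Mq_2}(\tilde A)''$ has finite index in $q_2Mq_2$. Finally I would push this finite-index inclusion from the corner $q_2Mq_2$ of $M$ into $N$: applying Lemma \ref{findex} with the projection $q_2$ and the finite-index subalgebra $R\subset q_2Mq_2$ produces, for each $t>0$, a finite-index inclusion $q_tP_0'q_t\subset q_tNq_t$, where $P_0'$ is generated by $E_N(R)$ and $q_t=1_{[t,\infty)}(E_N(q_2))$. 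Checking that $E_N(R)$ normalizes $A_0$ (so that $P_0'\subset\mathcal N_{qNq}(A_0)''$ after identifying the relevant corner $q=q_t$) then yields that $\mathcal N_{qNq}(A_0)''$ has finite index in $qNq$.

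The hard part is precisely this last transfer. The intertwining is spatial, implemented by $v$, and the range projection $q_2=vv^*$ need not lie in $N$, whereas Lemma \ref{findex} operates through the conditional expectation $E_N$. Reconciling the two requires care: one must verify simultaneously that $A_0$ can be chosen as a masa of $qNq$ with $q_2\in A_0'$ and $A_0q_2=A_1q_2$, and that averaging the $M$-normalizer $R$ of $\tilde A$ by $E_N$ lands inside the $N$-normalizer of $A_0$. I expect the cleanest route is to first conjugate the whole picture by $u$ (so that $\hat A=uAu^*$ is a masa of $M$ with finite-index normalizer $\hat P=uPu^*$ and $\hat Aq'=A_1q'\subset Nq'$), and then apply Lemma \ref{findex} to the inclusion $q'\hat Pq'\subset q'Mq'$, extracting $q$ as an appropriate spectral projection $q_t$ of $E_N(q')$ so that the $N$-normalizer of $A_0$ inherits finite index.
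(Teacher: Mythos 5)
Your skeleton matches the paper's: intertwine $A$ into $N$ via Theorem \ref{corner}, transport the finite-index normalizer through the partial isometry $v$, and use spectral projections of $E_N(vv^*)$ together with Lemma \ref{findex} to land a finite-index inclusion inside $N$. But the step you yourself flag as ``the hard part'' is a genuine gap, and it conceals two issues that the paper resolves with specific tools you do not invoke.

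First, you take $A_0$ to be an arbitrary masa of $qNq$ containing the abelian algebra $A_1=\theta(Ap_0)$, and then need to ``arrange'' that $q_2=vv^*$ commutes with $A_0$ and that $A_0q_2=A_1q_2$. This cannot be arranged after the fact: $q_2$ commutes with $A_1$ but typically not with any larger masa of $qNq$ (note $q_2\notin N$, so a masa of $qNq$ commuting with $q_2$ need not exist at all), and nothing forces the $E_N$-images of the normalizing unitaries of $\tilde A$ to normalize a masa chosen independently of them. The paper instead modifies $\theta$ and $v$ at the outset so that $A_1=\theta(Ap)$ is \emph{itself} a masa in $qNq$ (invoking the proofs of \cite[Theorem A.2]{Po01} and \cite[Lemma 1.5]{Io11}); then $q'=vv^*\in A_1'\cap qMq$ gives $E_N(q')\in A_1'\cap qNq=A_1$, so the spectral projections $q_t$ lie in $A_1$ and one simply sets $A_0=A_1q_t$, with $q_tq'\in (A_0)'\cap q_tMq_t$ for free. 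Second, the transfer of finite index requires showing that $E_N(vpPpv^*)\subset \mathcal N_{qNq}(A_1)''$, not merely applying Lemma \ref{findex} to the algebra generated by $E_N(R)$ and hoping it sits in the normalizer. The paper obtains this from the intertwining identity $\beta_u(x)E_N(vuv^*)=E_N(vuv^*)x$ for $x\in A_1$ and $u\in\mathcal N_{pPp}(Ap)$, combined with \cite[Lemma 2.1]{JP81} --- a lemma whose hypothesis is precisely that $A_1$ is a masa in $qNq$. So the masa upgrade in the first step is not a convenience but the enabling condition for the second; without both ingredients your argument does not close.
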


\begin{proof} Since $A\prec_{M}N$, by applying Theorem \ref{corner} we can find projections $p\in A,q\in N$, a $*$-homomorphism $\theta:Ap\rightarrow qNq$, and a non-zero partial isometry $v\in qMp$ such that $\theta(x)v=vx$, for all $x\in Ap$.  Since $A\subset M$ is a masa, we may assume that $v^*v=p$. Also, if  $q'=vv^*\in \theta(Ap)'\cap qMq$, then we can suppose that the support of $E_N(q')$ equals $q$.

Moreover, by  the proof of \cite[Theorem A.2]{Po01} and \cite[Lemma 1.5]{Io11}, we may assume that $A_1:=\theta(Ap)\subset qNq$ is a masa. Denote $P_1=\mathcal N_{qNq}(A_1)''$.

Since $A\subset P$ is regular, by \cite[Lemma 3.5 (2)]{Po03} it follows that $Ap\subset pPp$ is regular. 
Now, let $u\in\mathcal N_{pPp}(Ap)$ and define $\alpha_u\in$ Aut$(Ap)$ by  letting $\alpha_u(x)=uxu^*$.  Then we have that $\beta_u:=\theta\circ\alpha_u\circ\theta^{-1}\in$ Aut$(A_1)$ satisfies $\beta_u(x)(vuv^*)=(vuv^*)x$, for all $x\in A_1$.

By projecting onto $N$, we get that $\beta_u(x)E_N(vuv^*)=E_N(vuv^*)x$, for all $x\in A_1$.
Since $A_1\subset qNq$ is a masa and $E_N(vuv^*)\in qNq$, by \cite[Lemma 2.1]{JP81} we deduce that $E_N(vuv^*)\in P_1.$ Since $Ap\subset pPp$ is regular, we conclude that $E_N(vpPpv^*)\subset P_1$.

 For $t\geqslant 0$, let $q_t=1_{[t,\infty)}(E_N(q'))$. Then $q_t\in A_1'\cap qNq=A_1$, for all $t\geqslant 0$, and $\|q_t-q\|\rightarrow 0$, as $t\rightarrow 0$. In particular, we can find $t>0$ such that $q_t\not=0$.
Since $P\subset M$ has finite index,  $vpPpv^*\subset vpMpv^*=q'Mq'$ has finite index. By Lemma \ref{findex} we get that $q_tE_N(vpPpv^*)''q_t\subset q_tNq_t$ has finite index, hence $q_tP_1q_t\subset q_tNq_t$ has finite index.

Finally, since $A_1\subset P_1$ is regular, by \cite[Lemma 3.5.(2)]{Po03} we derive that $A_1q_t\subset q_tP_1q_t$ is regular. Thus, if  $A_0:=A_1q_t$, then the inclusion 
$P_0:=\mathcal N_{q_tNq_t}(A_0)''\subset q_tNq_t$ has finite index. Moreover, if $u\in M$ is any unitary extending $v$ and $p_0=\theta^{-1}(q_t)$, then $u(Ap_0)u^*=A_0(q_tq')$. Since $q_tq'\in (A_0q_t)'\cap q_tMq_t$, we are done.
\end{proof}

\subsection*{Proof of Theorem \ref{Cartan}}  We start the proof with two claims. Firstly, we claim that if a group $\Gamma$ satisfies  Theorem \ref{Cartan} then, in the setting from Theorem \ref{Cartan},  we moreover get that $A\prec_{M}^{s}B$. Indeed, by \cite[Lemma 2.5 and Proposition 2.6]{Va10b} we can find  projections $p_1,p_2\in \mathcal Z(P)$ such that $p_1+p_2=p$, $Ap_1\prec_M^{s}B$ and $Ap_2\nprec_{M}B$. Assume by contradiction that $p_2\not=0$. Then since $Pp_2\subset\mathcal N_{p_2Mp_2}(Ap_2)''$, $Pp_2\subset p_2Mp_2$ has finite index, and $\Gamma$ satisfies the conclusion of Theorem \ref{Cartan}, we would get that $Ap_2\prec_{M}B$.

Secondly, we claim that if $\Gamma_0<\Gamma$ is a finite index inclusion of groups and $\Gamma_0$ satisfies the conclusion of Theorem \ref{Cartan}, then $\Gamma$ also does. 
Let $M=B\rtimes\Gamma$ the crossed product algebra associated with a trace preserving action $\Gamma\curvearrowright (B,\tau)$.  Let $A\subset pMp$ be a masa such that $P=\mathcal N_{pMp}(A)''$ has finite index in $pMp$.
 Denote $M_0=B\rtimes\Gamma_0$. Since $\Gamma_0<\Gamma$ has finite index, we have that $M\prec_{M}M_0$. 
 By Proposition \ref{masa} we can find a non-zero projection $q\in M_0$ and  a masa $A_0\subset qM_0q$ such that  $P_0:=\mathcal N_{qM_0q}(A_0)''\subset qM_0q$ has finite index.
 Moreover, we can find non-zero projections  $p_0\in A$, $q'\in A_0'\cap qMq$, and  a unitary $u\in M$ such that $u(Ap_0)u^*=A_0q'$. Since $\Gamma_0$ satisfies the conclusion of Theorem \ref{Cartan}, our first claim yields that $A_0\prec_{M_0}^sB$. This implies that $A\prec_{M}B$, and shows that $\Gamma$ satisfies Theorem \ref{Cartan}, as claimed.

In the rest of the proof, we proceed by induction on $n$  to prove that every group $\Gamma\in\text{Quot}_n(\mathcal C_{rss})$ satisfies the conclusion of Theorem \ref{Cartan}.

First assume that $n=1$ and let $\Gamma\in Quot_1(\mathcal C_{rss})=\mathcal C_{rss}$. Let $M=B\rtimes\Gamma$ and $A\subset pMp$ be as in the hypothesis of Theorem \ref{Cartan}. Since $\Gamma\in\mathcal C_{rss}$ we have that either $A\prec_{M}B$ or $P=\mathcal N_{pMp}(A)''$ is amenable relative to $B$. 

Let us assume that $P$ is amenable relative to $B$ and derive a contradiction. Note that since $P\subset pMp$ has finite index, by Lemma \ref{ramen} we get that $pMp\prec_{pMp}^{s}P$. This easily implies that $pMp$ is amenable relative to $P$. Since $P$ is amenable relative to $B$, by using \cite[Proposition 2.4 (3)]{OP07} we get that $pMp$ is amenable relative to $B$. Therefore, $Mz$ is amenable relative to $B$, where $z\in M$ is the central support of $p$ (see \cite[Remark 2.2]{Io12a}). 

Thus, we can find a $Mz$-central state $\phi:z\langle M,e_B\rangle z\rightarrow\mathbb C$. Next, note that there is a $*$-isomorphism $\alpha:\langle M,e_B\rangle\rightarrow \mathbb B(\ell^2\Gamma)\bar{\otimes}B$ which satisfies $\alpha(u_\g)=\lambda_\g\otimes\sigma_\g$, where $\{\lambda_\g\}_{\g\in\Gamma}\subset\mathcal U(\ell^2\Gamma)$ is the regular representation and $\{\sigma_\g\}_{\g\in\Gamma}\subset\mathcal U(L^2(B))$ is the unitary representation associated with the action $\Gamma\curvearrowright (B,\tau)$. Let $\psi:\mathbb B(\ell^2\Gamma)\rightarrow\mathbb C$ be the state given by $\psi(T)=\phi(z(T\otimes 1)z)$. 

Then for all $T\in\mathbb B(\ell^2\Gamma)$ and $\g\in\Gamma$, we have $$\Psi(\lambda_\g T\lambda_\g^*)=\phi(z(\lambda_\g\otimes 1)(T\otimes 1)(\lambda_\g\otimes 1)^*z)=$$ $$\phi(z(\lambda_\g\otimes\sigma_\g)(T\otimes 1)(\lambda_\g\otimes\sigma_g)^*z)=\phi(zTz)=\psi(T).$$

Thus, $\psi$ is $\{\lambda_\g\}_{\g\in\Gamma}$-invariant and therefore $\Gamma$ is amenable, which is a contradiction. This shows that we must have $A\prec_{M}B$ and finishes the proof in the case $n=1$.

Let $n\geqslant 1$ and assume that the conclusion of Theorem \ref{Cartan} holds for any group in $Quot_n(\mathcal C_{rss})$. Let $\Gamma\in Quot_{n+1}(\mathcal C_{rss})$. By Definition \ref{chain} and Lemma \ref{quot}  we can find a group $\Gamma'$ that is commensurable to $\Gamma$,  a group $\Lambda\in\mathcal C_{rss}$, and a surjective homomorphism $\delta:\Gamma'\rightarrow\Lambda$ such that $\Sigma:=\ker{\delta}\in Quot_n(\mathcal C_{rss})$. Thus, we may assume that $\Gamma''=\Gamma\cap\Gamma'$ has finite index in both $\Gamma$ and $\Gamma'$. By our second claim, in order to show that $\Gamma$ satisfies Theorem \ref{Cartan}, it is enough to argue that $\Gamma''$ does. Moreover, it is easy to see that if $\Gamma'$ satisfies Theorem \ref{Cartan}, then $\Gamma''$ does.
Altogether,  we may assume that $\Gamma=\Gamma'$.

Let $M=B\rtimes\Gamma$ the crossed product algebra associated with a trace preserving action $\Gamma\curvearrowright (B,\tau)$.  Let $A\subset pMp$ be a masa such that $P=\mathcal N_{pMp}(A)''$ has finite index in $pMp$. Our goal is to show that $A\prec_{M}B$.

To this end, let $\theta:M\rightarrow M\bar{\otimes}L(\Lambda)$ be the $*$-homomorphism defined in Notation \ref{comu}. 
We identify $M\bar{\otimes}L(\Lambda)=M\rtimes\Lambda$, where $\Lambda$ acts trivially on $M$. Since $\Lambda\in\mathcal C_{rss}$ and $\theta(P)\subset\mathcal N_{\theta(p)(M\bar{\otimes}L(\Lambda))\theta(p)}(\theta(A))''$, we have that either $\theta(A)\prec_{M\bar{\otimes}L(\Lambda)}M\otimes 1$, or $\theta(P)$ is amenable relative to $M\otimes 1$ inside $M\bar{\otimes}L(\Lambda)$. 

Let us show that the second case leads to a contradiction. Assume that $\theta(P)$ is amenable relative to $M\otimes 1$ inside $M\bar{\otimes}L(\Lambda)$. Since $P\subset pMp$ has finite index, by Lemma \ref{ramen} we get that $pMp\prec_{pMp}^sP$, hence $pMp$ is amenable relative to $P$ inside $pMp$. From this we derive that $\theta(pMp)$ is amenable relative to $\theta(P)$ inside $M\bar{\otimes}L(\Lambda)$.  By applying \cite[Proposition 2.4 (3)]{OP07}  we get that $\theta(pMp)$ is amenable relative to $M\otimes 1$ inside $M\bar{\otimes}L(\Lambda)$. Since $\delta$ is surjective, Proposition \ref{prop2} implies that $\Lambda$ is amenable, which is a contradiction.
 
Therefore,  the first case must hold, i.e. $\theta(A)\prec_{M\bar{\otimes}L(\Lambda)}M\otimes 1$. Then Proposition \ref{prop1} implies that 
$A\prec_{M}N:=B\rtimes\Sigma$. By Lemma \ref{masa} we can find a non-zero projection $q\in N$ and a masa  $A_0\subset qNq$ such that 
$P_0:=\mathcal N_{qNq}(A_0)''\subset qNq$ has finite index. Moreover, we can find non-zero projections $p_0\in A$, $q'\in A_0'\cap qMq$, and a unitary $u\in M$ such that $u(Ap_0)u^*=A_0q'$. 

Since $\Sigma\in Quot_n(\mathcal C_{rss})$, the induction hypothesis gives that $\Sigma$ verifies conclusion of Theorem \ref{Cartan}. Therefore, by using the first claim of the proof, we deduce that $A_0\prec_{N}^sB$. This together with the equality $u(Ap_0)u^*=A_0q'$ implies that $A\prec_{M}B$, which finishes the proof of the theorem. \hfill$\square$

\subsection*{Proof of Corollary \ref{cartan}} Let $A\subset  M=L^{\infty}(X)\rtimes\Gamma$ be a Cartan subalgebra. Then by applying Theorem \ref{Cartan} we get that $A\prec_{M}L^{\infty}(X)$. 
By \cite[Theorem A.1]{Po01} we conclude that $A=uL^{\infty}(X)u^*$, for some unitary $u\in M$. \hfill$\square$

\section{Proofs of Theorems \ref{A} and \ref{B}}

\subsection{Proof of Theorem \ref{B}} Let $\Gamma<\Gamma_1\times\cdots\times\Gamma_k$ be a finite index subgroup, where $\Gamma_1,\ldots,\Gamma_k$ are groups in the family $\{B_n|n\geqslant 3\}$.
Corollary \ref{surjections} gives that $\tilde B_n\in Quot_{n-2}(\mathcal F)$, where $\mathcal F$ is the class of all non-abelian free groups, for all $n\geqslant 3$. 
Then Lemma \ref{prod} implies that $\Gamma\in Quot_N(\mathcal F)$, for some  $N\geqslant 1$. 
Since we have that $\mathcal F\subset\mathcal C_{rss}$ by Popa-Vaes' dichotomy Theorem \ref{pv},  Corollary \ref{cartan} yields the conclusion. \hfill$\square$

\subsection{Proof of Theorem \ref{A}} The conclusion follows by combining together Theorem \ref{B}, Theorem \ref{yoshi} and Remark \ref{map}. \hfill$\square$

\begin{Remark}
We note that by combining Theorem \ref{B}, Theorem \ref{yoshi},  Remark \ref{map} and Remark \ref{stable}, it follows any action $\Gamma\curvearrowright (X,\mu)$ as in Theorem \ref{A} is {\it stably W$^*$-superrigid}, in the sense of \cite[Definition 6.4]{PV09}.
\end{Remark}

\vskip 0.1in

Theorem \ref{B} in combination with a result from \cite{Ki06} leads to a new class of groups whose every free ergodic action gives rise to a II$_1$ factor with trivial fundamental group.

\begin{cor}
Let $\Gamma$ be a group as in Theorem \ref{A} and $\Gamma\curvearrowright (X,\mu)$ be a free ergodic pmp action.

Then the II$_1$ factor $M=L^{\infty}(X)\rtimes\Gamma$ has trivial fundamental group, $\mathcal F(M)=\{1\}$.
\end{cor}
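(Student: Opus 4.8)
The plan is to reduce the triviality of the fundamental group to the rigidity already established, using the fact that scaling a II$_1$ factor corresponds to passing between stably conjugate actions. Let $M=L^\infty(X)\rtimes\Gamma$ and suppose $t>0$ is such that $M^t\cong M$. Writing $M^t$ as an amplification realized by a corner, the isomorphism $M\cong M^t$ yields an isomorphism $L^\infty(X)\rtimes\Gamma\cong L^\infty(Y)\rtimes\Lambda$ for a suitable action $\Lambda\curvearrowright(Y,\nu)$ whose associated factor is naturally $M^t$; here one takes $\Lambda=\Gamma$ acting on an amplified orbit equivalence relation. The point is that any $t$ in the fundamental group produces, via Theorem \ref{A} (more precisely via Theorem \ref{B} together with Theorem \ref{yoshi} and Remark \ref{map}), a stable conjugacy between $\Gamma\curvearrowright X$ and a rescaled copy of itself.

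Concretely, I would argue as follows. By Theorem \ref{B}, $A=L^\infty(X)$ is the unique Cartan subalgebra of $M$ up to unitary conjugacy, and the same holds for every amplification $M^t$, whose unique Cartan subalgebra is $L^\infty(X)^t=A^t$. Hence any automorphism of $M$, or any isomorphism $M\cong M^t$, must send $A$ to $A^t$ after a unitary correction, and therefore induces an isomorphism of the associated orbit equivalence relations $\mathcal R(\Gamma\curvearrowright X)\cong\mathcal R(\Gamma\curvearrowright X)^t$. In other words, $t$ belongs to the fundamental group of the equivalence relation $\mathcal R:=\mathcal R(\Gamma\curvearrowright X)$. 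By Theorem \ref{yoshi} and Remark \ref{map}, the action $\Gamma\curvearrowright X$ is OE superrigid, so the OE relation remembers the group $\Gamma$ up to the commensurability data in Definition \ref{conj}; in particular the fundamental group of $\mathcal R$ is constrained by the (lack of) self-similarities of $\Gamma$.

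The key input from \cite{Ki06} that I would invoke is that, for a finite index subgroup $\Gamma$ of a direct product of mapping class groups as in Theorem \ref{yoshi}, the cost/rigidity computation forces the fundamental group of $\mathcal R(\Gamma\curvearrowright X)$ to be trivial: a stable self-conjugacy with scaling constant $t$ would, by the superrigidity conclusion, produce an isomorphism $\delta:\Gamma_0\to\Gamma_0$ between finite index subgroups together with matching finite normal subgroups, and tracking the index data shows $t=1$. Thus combining the uniqueness of the Cartan subalgebra (Theorem \ref{B}) with the OE superrigidity and the triviality of the fundamental group of the relation (both from \cite{Ki06}) gives $\mathcal F(M)=\{1\}$.

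The main obstacle I anticipate is the bookkeeping in passing from $\mathcal F(M)$ to $\mathcal F(\mathcal R)$ and then to the group-theoretic self-similarity data: one must verify carefully that the unique Cartan subalgebra of the amplification $M^t$ is indeed $A^t$, so that an isomorphism $M\cong M^t$ genuinely descends to a stable orbit equivalence of $\Gamma\curvearrowright X$ with itself at scaling $t$, rather than merely a von Neumann algebraic isomorphism. Once this descent is in place, the triviality of $t$ is a formal consequence of the virtual conjugacy statement in Theorem \ref{yoshi} applied with $\Lambda=\Gamma$, since the surface groups in question admit no nontrivial commensurations producing a scaling $t\neq 1$.
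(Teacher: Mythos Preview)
Your approach is essentially the paper's: reduce $\mathcal F(M)$ to the fundamental group of the orbit equivalence relation $\mathcal R$ via the unique Cartan subalgebra result (Theorem~\ref{B}), and then invoke \cite{Ki06} to conclude $\mathcal F(\mathcal R)=\{1\}$. The paper does this in two lines, citing \cite[Corollary~2.8]{Ki06} directly for the second step rather than unpacking it through OE superrigidity and commensuration bookkeeping as you sketch; your first paragraph's attempt to realize $M^t$ as a group measure space factor $L^\infty(Y)\rtimes\Lambda$ is unnecessary and slightly muddled, but your second paragraph recovers exactly the correct reduction.
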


\begin{proof}
Since $L^{\infty}(X)$ is the unique Cartan subalgebra of $M$ by Theorem \ref{A}, we get that $\mathcal F(M)=\mathcal F(\mathcal R)$, where $\mathcal R$ is the equivalence relation $\mathcal R=\{(x,y)\in X\times X|\Gamma\cdot x=\Gamma\cdot y\}$. Since $\mathcal F(\mathcal R)=\{1\}$ by \cite[Corollary 2.8]{Ki06}, we are done.
\end{proof}

\vskip 0.1in
Since $P_n\cong\tilde P_n\times\mathbb Z$ and  $P_n<B_n$ has finite index,
Theorems \ref{A} and \ref{B} do not hold when $\Gamma$ is equal to either $B_n$ or $P_n$. 
However,  as a consequence of Theorem \ref{Cartan}, one still obtains some information about the structure of Cartan subalgebras in II$_1$ factors associated to actions of $B_n$ and $P_n$.

\begin{cor}
Let $\Gamma\in\{B_n,P_n\}$, for some $n\geqslant 3$, and  $Z$ be the common center of $B_n$ and $P_n$. 
 Let $\Gamma\curvearrowright (X,\mu)$ be a free ergodic pmp action and denote $M=L^{\infty}(X)\rtimes\Gamma$.
 
 If $A\subset M$ is a Cartan subalgebra, then $A\prec_{M}L^{\infty}(X)\rtimes Z$. 
\end{cor}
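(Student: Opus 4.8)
The plan is to reduce the statement for $\Gamma \in \{B_n, P_n\}$ to the already-established Theorem \ref{Cartan}, exploiting the product decomposition of these groups through their centers. First I would handle the case $\Gamma = P_n$, where the structure is cleanest: since $P_n \cong \tilde P_n \times Z$ with $Z \cong \mathbb{Z}$ central, the crossed product splits as $M = L^\infty(X) \rtimes P_n$, and I would analyze it relative to the central subalgebra $N := L^\infty(X) \rtimes Z$. The goal is to show $A \prec_M N$ for any Cartan subalgebra $A$. The idea is to push $A$ through the quotient map $P_n \to \tilde P_n$ in the spirit of Notation \ref{comu}: apply Corollary \ref{surjections} to get $\tilde P_n \in Quot_{n-2}(\mathcal F) \subset Quot_{n-2}(\mathcal C_{rss})$, so $\tilde P_n$ satisfies the conclusion of Theorem \ref{Cartan}.

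Concretely, I would set up the homomorphism $\delta : P_n \to \tilde P_n$ (the central quotient map) and the associated $*$-homomorphism $\theta : M \to M \bar\otimes L(\tilde P_n)$ from Notation \ref{comu}, with $\Sigma = \{e\}$ so that $\delta^{-1}(\Sigma) = Z$. Since $A$ is a masa in $M$ with $\mathcal N_M(A)'' = M$ (so $P := \mathcal N_M(A)''$ trivially has finite index, being all of $M$), and $\tilde P_n \in Quot_{n-2}(\mathcal C_{rss})$, I would apply the dichotomy as in the proof of Theorem \ref{Cartan}: either $\theta(A) \prec_{M \bar\otimes L(\tilde P_n)} M \otimes 1$, or $\theta(P)$ is amenable relative to $M \otimes 1$. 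The second alternative forces $\tilde P_n$ amenable by Proposition \ref{prop2}, a contradiction since $\tilde P_n$ is non-amenable. Hence the first alternative holds, and Proposition \ref{prop1} (with $\Sigma$ trivial) yields $A \prec_M B \rtimes Z = L^\infty(X) \rtimes Z$, which is the desired conclusion for $\Gamma = P_n$.

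For the case $\Gamma = B_n$, I would pass to the finite-index pure braid subgroup. Since $P_n < B_n$ has finite index, $N = L^\infty(X) \rtimes Z$ sits inside $M_0 := L^\infty(X) \rtimes P_n$, which in turn has finite index in $M = L^\infty(X) \rtimes B_n$. Using the ``finite index descent'' machinery already deployed in the proof of Theorem \ref{Cartan} — namely Lemma \ref{ramen} giving $M \prec_M^s M_0$, together with Proposition \ref{masa} — I would transfer a Cartan subalgebra $A \subset M$ down to a masa $A_0$ in a corner $q M_0 q$ with finite-index normalizer, apply the $P_n$-case to conclude $A_0 \prec_{M_0}^s N$, and then lift back via the relation $u(Ap_0)u^* = A_0 q'$ to obtain $A \prec_M N$. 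This mirrors verbatim the ``second claim'' reduction in the proof of Theorem \ref{Cartan}.

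The main obstacle I anticipate is ensuring that the intertwining passes cleanly through both the central quotient and the finite-index inclusion simultaneously without losing control of the target algebra $N = L^\infty(X) \rtimes Z$. In particular, the step combining Proposition \ref{masa} (which produces an intertwining into $B \rtimes \Sigma$ for the \emph{kernel} $\Sigma$) with the explicit choice $\Sigma = \{e\}$ must be checked to genuinely land in $L^\infty(X) \rtimes Z$ rather than a mere corner thereof; and because $Z$ is central and abelian, $N$ is not a factor, so one cannot upgrade $A \prec_M N$ to a unitary conjugacy as in Corollary \ref{cartan} — which is precisely why the statement only asserts the embedding $A \prec_M L^\infty(X) \rtimes Z$ and not equality up to unitary conjugacy. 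Keeping track of this distinction, and verifying that the non-amenability of $\tilde P_n$ (equivalently $\tilde B_n$) is all that is needed in the Proposition \ref{prop2} step, will be the delicate points.
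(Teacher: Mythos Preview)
Your finite-index descent from $B_n$ to $P_n$ via Proposition \ref{masa} matches the paper exactly, and the observation that the statement can only claim $A\prec_M L^\infty(X)\rtimes Z$ (not unitary conjugacy) is on target. The gap is in your treatment of the $P_n$ case. You invoke ``the dichotomy as in the proof of Theorem \ref{Cartan}'' for the comultiplication $\theta:M\to M\bar\otimes L(\tilde P_n)$, concluding that either $\theta(A)\prec M\otimes 1$ or $\theta(P)$ is amenable relative to $M\otimes 1$. But that dichotomy is Theorem \ref{pv}, and it requires the target group $\Lambda$ to lie in $\mathcal C_{rss}$. You yourself note that $\tilde P_n\in Quot_{n-2}(\mathcal C_{rss})$; for $n\geqslant 4$ there is no claim in the paper that $\tilde P_n\in\mathcal C_{rss}$ (weak amenability plus a proper cocycle weakly contained in the regular representation is not established for $\tilde P_n$). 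So the step ``either $\theta(A)\prec M\otimes 1$ or $\theta(P)$ is relatively amenable'' is unjustified, and the rest of the argument collapses for $n\geqslant 4$.

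The paper avoids this entirely by a much more direct route: since $P_n\cong \tilde P_n\times Z$, one has the identification $M_0=L^\infty(X)\rtimes P_n=(L^\infty(X)\rtimes Z)\rtimes\tilde P_n$. Now Theorem \ref{Cartan} applies \emph{as a black box} with base algebra $B=L^\infty(X)\rtimes Z$ and acting group $\tilde P_n\in Quot_{n-2}(\mathcal C_{rss})$: given the masa $A_0\subset qM_0q$ with finite-index normalizer (produced by your descent), Theorem \ref{Cartan} immediately yields $A_0\prec_{M_0}L^\infty(X)\rtimes Z$, and the relation $u(Ap_0)u^*=A_0q'$ pulls this back to $A\prec_M L^\infty(X)\rtimes Z$. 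No comultiplication, no Proposition \ref{prop1} or \ref{prop2} needed at this stage --- the inductive machinery is already packaged inside Theorem \ref{Cartan}.
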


\begin{proof} Let $\Gamma_0=P_n$, viewed as a subgroup of $\Gamma$. Recall that if $Z$ denotes the center of $P_n$ and $\tilde P_n=P_n/Z$, then $P_n\cong\tilde P_n\times Z$. Denote $M_0=L^{\infty}(X)\rtimes\Gamma_0$.

Since $\Gamma_0<\Gamma$ has finite index, by Proposition \ref{masa}  we can find a non-zero projection $q\in M_0$ and a masa 
$A_0\subset qM_0q$ such that  $P_0=\mathcal N_{qM_0q}(A_0)''\subset qM_0q$ has finite index.  
Moreover, we can find non-zero projections $p_0\in A$, $q'\in A_0'\cap qMq$, and a unitary $u\in M$ such that $u(Ap_0)u^*=A_0q'$.

Finally, we identify $M_0=(L^{\infty}(X)\rtimes Z)\rtimes\tilde P_n$. Since Corollary \ref{surjections} and Theorem \ref{pv} give that $\tilde P_n\in Quot_{n-2}(\mathcal F)$, by Theorem \ref{Cartan} we deduce that $A_0\prec_{M_0}L^{\infty}(X)\rtimes Z$. This easily implies that $A\prec_{M}L^{\infty}(X)\rtimes Z$.
\end{proof}

\subsection{W$^*$-superrigidity for actions of mapping class groups associated with surfaces of low genus}

In this subsection, we present a generalization of Theorem \ref{A} and explain how it provides evidence towards a general conjecture.

As mentioned before, the central quotient of the braid group $\tilde B_n$ can be canonically identified with a finite index subgroup of the mapping class group $\mod(S_{0,n+1})$ of an orientable surface $S_{0,n+1}$ of genus zero (sphere) with $n+1$ punctures. Thus, for any $n\geqslant 4$, we have $\mod(S_{0,n})\in Quot_{n-3}(\mathcal F)$ and our main unique Cartan subalgebra results apply to these groups as well. 

Next, we briefly argue that the situation is similar for the mapping class group $\mod(S_{g,n})$ of an orientable surface $S_{g,n}$ of genus $g=1,2$ with $n$ punctures. To see this, we need to recall some basic properties of these groups. Firstly, the results from \cite[page 57]{FM11} show that the mapping class group $\mod(S_{1,1})$ of the once-punctured two-torus $S_{1,1}$ can be identified with $SL(2,\mathbb Z)$, which is well known to contain a non-abelian free group of index $12$.
Also, by a result of Birman and Hilden from \cite{BiHi73}, the mapping class group $\mod(S_{2,0})$ is a $\mathbb Z_2$ central extension of the mapping class group $\mod(S_{0,6})$. Secondly, we recall that for all integers $g,k\geqslant 1$ we 
have Birman's short exact sequence of groups $1\ra \pi_1(S_{g,k})\ra \mod (S_{g,k+1})\ra \mod(S_{g,k}) \ra 1$. Here, $\pi_1(S_{g,k})$ denotes the fundamental group of $S_{g,k}$, which one can easily see that is isomorphic to the free group
of rank $2g+k-1$. Altogether, by arguing as in the subsection \ref{braidgroups}, these remarks imply that 
$\mod(S_{1,n})\in Quot_{n}(\mathcal F)$, for all $n\geqslant 1$, and $\mod(S_{2,n})\in Quot_{n+3}(\mathcal F)$, for all  $n\geqslant 0$. 

In conclusion, combining Corollary \ref{cartan} above with the main results in \cite{Ki06}, we obtain the following $W^*$-superrigidity statement:

\begin{cor}\label{mcglowgen} 
Fix an integer $k\geqslant 1$ and asssume that $\Gamma_1,\ldots,\Gamma_k$ are groups belonging to the family $\{\mod(S_{g,n})|g=0,1,2; \kappa(S_{g,n})>0; (g,n)\neq(2,0),(1,2)\}$. Let $\Gamma<\Gamma_1\times\cdots\times\Gamma_k$ be a finite index subgroup
and let $\Gamma\curvearrowright (X,\mu)$ be a free ergodic pmp action.
 Let $\Lambda\curvearrowright (Y,\nu)$ be an arbitrary free ergodic pmp action.

If $L^{\infty}(X)\rtimes\Gamma\cong L^{\infty}(Y)\rtimes\Lambda$, then  the actions $\Gamma\curvearrowright X$, $\Lambda\curvearrowright Y$ are stably conjugate. 
Moreover, if the action $\Gamma\curvearrowright X$ is aperiodic, then  the actions $\Gamma\curvearrowright X$, $\Lambda\curvearrowright Y$ are conjugate:   there exist  an isomorphism of probability spaces  $\theta:(X,\mu)\rightarrow (Y,\nu)$ and an isomorphism of groups $\delta:\Gamma\rightarrow\Lambda$  such that $\theta(\g\cdot x)=\delta(\g)\cdot\theta(x)$, for all $\g\in\Gamma$ and almost every $x\in X$.
\end{cor}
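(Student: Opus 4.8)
The plan is to run the same two-step argument that underlies Theorems \ref{A} and \ref{B}: first establish that $L^\infty(X)$ is the unique Cartan subalgebra of $M=L^\infty(X)\rtimes\Gamma$ up to unitary conjugacy, and then use Kida's orbit equivalence superrigidity theorem to upgrade the resulting orbit equivalence to a stable conjugacy.

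For the first step, I would verify that each admissible factor lies in some $Quot_{N_i}(\mathcal C_{rss})$, using the structural facts assembled just above the statement. The class $\mathcal F$ of non-abelian free groups is contained in $\mathcal C_{rss}$ by Theorem \ref{pv}, and since $\mathcal C_{rss}$ is stable under passing to finite index over- and subgroups, the virtually free group $\mod(S_{1,1})\cong SL(2,\mathbb Z)$ also lies in $\mathcal C_{rss}$. With this at hand: $\mod(S_{0,n})$, being commensurable to $\tilde B_{n-1}$, lies in $Quot_{n-3}(\mathcal C_{rss})$; iterating Birman's exact sequence $1\to\pi_1(S_{1,k})\to\mod(S_{1,k+1})\to\mod(S_{1,k})\to 1$ with its free kernel gives $\mod(S_{1,n})\in Quot_{n}(\mathcal C_{rss})$ for $n\geqslant 1$; and the Birman--Hilden description of $\mod(S_{2,0})$ as a finite central extension of $\mod(S_{0,6})$, combined with the same sequence, gives $\mod(S_{2,n})\in Quot_{n+3}(\mathcal C_{rss})$ for $n\geqslant 0$. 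Since $Quot_N(\mathcal C_{rss})$ is closed under commensurability by Definition \ref{chain}, Lemma \ref{prod} then places the finite index subgroup $\Gamma<\Gamma_1\times\cdots\times\Gamma_k$ in $Quot_N(\mathcal C_{rss})$ with $N=\sum_i N_i$. Corollary \ref{cartan} applies and yields that $L^\infty(X)$ is the unique Cartan subalgebra of $M$ up to unitary conjugacy; consequently, any free ergodic pmp action $\Lambda\curvearrowright(Y,\nu)$ with $L^\infty(X)\rtimes\Gamma\cong L^\infty(Y)\rtimes\Lambda$ is necessarily orbit equivalent to $\Gamma\curvearrowright(X,\mu)$.

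For the second step, I would feed this orbit equivalence into Kida's superrigidity theorem. Through the isomorphism $\mod(S_{g,n})\cong\Gamma(R_{g,n})$ of Remark \ref{map}, and because the hypotheses impose $\kappa(S_{g,n})>0$, Theorem \ref{yoshi} applies to each factor and hence, being formulated for finite index subgroups of products $\Gamma(R_{g_1,n_1})\times\cdots\times\Gamma(R_{g_k,n_k})$, to $\Gamma$ itself. It then gives that $\Gamma\curvearrowright X$ and $\Lambda\curvearrowright Y$ are stably conjugate, and conjugate when $\Gamma\curvearrowright X$ is aperiodic, which is the assertion to be proved.

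The delicate point---and the source of the two exclusions---is the precise range of surfaces for which the second step is licensed. The cases $(g,n)=(2,0)$ and $(1,2)$ satisfy $\kappa>0$, and the unique-Cartan step of the argument covers them without modification; however $\mod(S_{2,0})$ and $\mod(S_{1,2})$ are exactly the exceptional low-complexity surfaces carrying a central hyperelliptic involution, and they fall outside the scope of the orbit equivalence superrigidity input of \cite{Ki06}. Thus the genuine constraint is imposed entirely by the applicability of Theorem \ref{yoshi}, and this is what forces the statement to be phrased with $(g,n)\neq(2,0),(1,2)$.
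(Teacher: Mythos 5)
Your proposal is correct and runs along the paper's own route: place each admissible factor in a class $Quot_{N_i}$ over relatively strongly solid base groups (via the commensurability of $\mod(S_{0,n})$ with $\tilde B_{n-1}$, the Birman exact sequences for genus one and two, and Birman--Hilden for $\mod(S_{2,0})$), combine with Lemma \ref{prod} and Corollary \ref{cartan} to get uniqueness of the Cartan subalgebra and hence orbit equivalence, and then invoke Theorem \ref{yoshi} through Remark \ref{map}; your diagnosis that the exclusions $(g,n)=(2,0),(1,2)$ are forced entirely by the OE-superrigidity input also matches the paper's remark after the statement. The one place where you deviate is the base of the genus-one chain: you put $\mod(S_{1,1})\cong SL(2,\mathbb Z)$ into $\mathcal C_{rss}$ by asserting that $\mathcal C_{rss}$ is closed under passage to finite-index over- and subgroups, a closure property that is nowhere established in the paper and is not immediate from the von Neumann algebraic definition of the class. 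The paper instead keeps the base class equal to $\mathcal F$ and argues as in Subsection \ref{braidgroups}: pull the index-$12$ free subgroup of $SL(2,\mathbb Z)$ back through the Birman chain to obtain a finite-index subgroup of $\mod(S_{1,n})$ whose chain terminates in an honest free group, and then use the commensurability clause of Definition \ref{chain}. Substituting that argument (or supplying a proof of your closure claim) makes the genus-one case airtight; everything else in your write-up stands.
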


Note that the restrictions $\kappa(S_{g,n})>0$ and $(g,n)\neq(2,0),(1,2)$ in the statement above are necessary conditions carried over from the OE-superrigidity results in \cite{Ki06}. 

In view of this, we conjecture that Corollary \ref{mcglowgen} holds  for any genus $g\geqslant 3$. However, at this moment, it is unclear whether our methods can be potentially applied in such situations. For instance it is known that the mapping class group of a surface of genus above three do not surject onto any non-trivial free groups, since they are perfect groups \cite{Pow78}. Moreover, the same holds for many of their finite index subgroups (see \cite[Corollary 7.4]{HL95}, \cite[Theorem A]{M01}, and \cite[Theorem B]{Pu10}; see also \cite[Section 7]{Iv06}). However, we remark the possibility that these groups may admit natural surjections onto other type of groups to which the present deformation/rigidity theory methods could successfully apply.

\section{Proof of Theorem \ref{C}}\label{CC}

The goal of this section is to prove Theorem \ref{C}.
In the proof of Theorem \ref{C} we will combine the methods developed above with some powerful results  due to D. Osin \cite{Os05} and respectively F. Dahmani, G. Guirardel and D. Osin \cite{DGO11}. More precisely, we will need the following structural result for relatively hyperbolic groups:

\begin{cor}[Osin]\label{dgo-homo} Let $\Gamma$ be a group which is hyperbolic relative to a  finite family of proper finitely generated,  residually finite, infinite subgroups groups $\{H_\iota \,|\, \iota \in I \}$. 

Then there exist a non-elementary hyperbolic group $K$ and a surjective group homomorphism $\delta:\G \ra K$ such that $N=\ker(\delta)$ is isomorphic to a non-trivial (possible infinite) free product $N =\ast_{\iota\in I} \ast_{\gamma\in T_{\iota}} N_\iota^\gamma$, where $T_{\iota}\subset \Gamma$ are some subsets and $N^{\gamma} _\iota=\gamma N_\iota\gamma^{-1}$ is the conjugate of $N_\iota$ by $\gamma$. In particular, there exist infinite groups $L_1$ and $L_2$ such that $ker(\delta)=L_1\star L_2$.    \end{cor}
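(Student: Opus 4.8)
The plan is to obtain $\delta$ as a \emph{group-theoretic Dehn filling} of $\Gamma$ along its peripheral subgroups $\{H_\iota\}$, choosing the filling kernels inside the $H_\iota$ by means of their residual finiteness, and then to read off the free-product decomposition of $\ker(\delta)$ from the Cohen--Lyndon property enjoyed by such fillings.

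First I would record that $\Gamma$ is non-elementary: it properly contains each infinite peripheral subgroup $H_\iota$, which is of infinite index in $\Gamma$, so $\Gamma$ is neither finite nor virtually cyclic. Next, by Osin's filling theorem \cite{Os05} there is a finite subset $\mathcal F\subset\Gamma\setminus\{1\}$ with the following property: whenever one chooses normal subgroups $N_\iota\unlhd H_\iota$ with $N_\iota\cap\mathcal F=\emptyset$, the quotient $K:=\Gamma/\langle\langle\bigcup_{\iota}N_\iota\rangle\rangle$ is hyperbolic relative to the family $\{H_\iota/N_\iota\}$, each $H_\iota/N_\iota$ embeds into $K$, and (for $\mathcal F$ enlarged if necessary) $K$ is again non-elementary. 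Here is where the hypotheses on the $H_\iota$ enter: since $H_\iota$ is residually finite, for each $\iota$ I can intersect finitely many finite-index normal subgroups to obtain a finite-index normal subgroup $N_\iota\unlhd H_\iota$ disjoint from the finite set $\mathcal F\cap H_\iota$; as $H_\iota$ is infinite, such $N_\iota$ is automatically infinite. With this choice every $H_\iota/N_\iota$ is finite, so $K$ is hyperbolic relative to a finite family of finite subgroups, i.e.\ $K$ is a genuine word-hyperbolic group, and non-elementary by the above. Let $\delta:\Gamma\to K$ be the quotient map, which is surjective with $N:=\ker(\delta)=\langle\langle\bigcup_\iota N_\iota\rangle\rangle$.

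To describe $N$ I would invoke the Cohen--Lyndon property for deep Dehn fillings established in \cite{DGO11}: the normal closure $N$ splits as the free product $N=\ast_{\iota\in I}\ast_{\gamma\in T_\iota}\gamma N_\iota\gamma^{-1}$, where $T_\iota$ is a set of left coset representatives for $H_\iota N$ in $\Gamma$ (the conjugates $\gamma N_\iota\gamma^{-1}$ repeating exactly along $H_\iota N$ because $N_\iota$ is normal in $H_\iota$). Each free factor $\gamma N_\iota\gamma^{-1}$ is isomorphic to the infinite group $N_\iota$, and since $K=\Gamma/N$ is infinite while the image $H_\iota N/N\cong H_\iota/N_\iota$ is finite, the index $[\Gamma:H_\iota N]=[K:H_\iota N/N]$ is infinite; hence each $T_\iota$ is infinite and $N$ is a nontrivial free product of infinitely many infinite factors.

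The final clause is then immediate: partitioning the (nonempty, at least two-element) family of free factors into two nonempty subfamilies and letting $L_1,L_2$ be the free products over the two parts gives $\ker(\delta)=L_1\star L_2$ with $L_1,L_2$ both infinite, since each contains an infinite factor as a free factor. The step I expect to be the main obstacle is guaranteeing that $K$ stays non-elementary: one must verify that the chosen fillings are deep enough that $K$ neither collapses nor becomes virtually cyclic, which is precisely the content of the non-elementary refinement of the filling theorems of \cite{Os05,DGO11}, used together with the observation that $\Gamma$ is non-elementary to begin with.
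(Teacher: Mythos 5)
Your proposal is correct in outline and follows essentially the same route as the paper: a group-theoretic Dehn filling along the peripheral subgroups, with filling kernels $N_\iota\unlhd H_\iota$ of finite index chosen via residual finiteness to avoid the finite ``deepness'' set, followed by the free-product (Cohen--Lyndon type) description of the normal closure from \cite{DGO11}. The one place where you defer to an unspecified ``non-elementary refinement'' of the filling theorems is exactly where the paper does concrete work: it first invokes \cite[Corollaries 1.7 and 4.5]{Os06} to produce two infinite-order elements $\g,\la$ such that $\Gamma$ is hyperbolic relative to $\{H_\iota\}\cup\{E_\G(\g),E_\G(\la)\}$, and then fills trivially along these two maximal elementary subgroups; by the injectivity clause of \cite[Theorem 1.1]{Os05} their images in $K$ are infinite virtually cyclic subgroups with finite intersection (almost malnormality, \cite[Theorem 1.7]{Os04}), which forces $K$ to be non-elementary. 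So your flagged ``main obstacle'' is real but surmountable, and this is the argument that fills it. A further small difference: you get non-triviality of the free product by observing that $H_\iota N$ has infinite index in $\Gamma$ (since $K$ is infinite and $H_\iota/N_\iota$ is finite), so each transversal $T_\iota$ is infinite, whereas the paper deduces it from almost malnormality of the $H_\iota$; both arguments work, and yours is arguably more self-contained at that step.
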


 We are grateful to Denis Osin for kindly communicating to us this result, which can be viewed as  a technical variation of \cite[Theorem 1.1]{Os05} and \cite[Theorem 7.9]{DGO11}.

\begin{proof}  By  combining \cite[Corollaries 1.7 and 4.5]{Os06} we can find two elements
  $\g,\la \in\Gamma$ of infinite order such that  $\Gamma $ is hyperbolic
 relative to the collection $\{H_\iota |\iota\in I\}\cup \{E_\G(\g), E_\G(\la)\}$. Here
 $E_\G(\g)$ and $E_\G(\la)$ denote the maximal, virtually cyclic subgroups containing $\g$ and $\la$, respectively. Notice that since the group $H_\iota$ is residually finite, for every $\iota\in I$, one can find a finite index normal subgroup $N_\iota\lhd H_\iota$ which avoids any given finite subset in $\Gamma \setminus \{e\}$. Thus,  any such a family of $N_\iota$'s  together with the trivial subgroups of $E_\G(\g)$ and $E_\G(\la)$ satisfies the hypothesis of  \cite[Theorem 1.1]{Os05}. This theorem implies that the quotient group $K:=\G /N$ by the normal closure $N:=\langle\cup_\iota N_\iota\rangle^\G$  of $\cup_\iota N_\iota$ in $\G$ is hyperbolic.  
 
 We claim that $K$ is non-elementary hyperbolic. To see this, let $\pi:\G\rightarrow K$ be the quotient homomorphism. Denote $A=\pi(E_\G(\g))$ and $B=\pi(E_\G(\la))$. Since by \cite[Theorem 1.1]{Os05}, the restriction of $\pi$ to  $ E_\G(\g)$ and  $E_\G(\la)$ is injective, we get that $A$ and $B$ are infinite, virtually cyclic subgroups of $K$. Since $E_\G(\g)\cap E_\G(\la)$ is finite by \cite[Theorem 1.7]{Os04}, we also get that $A\cap B$ is finite.  
 This shows that $K$ is not virtually cyclic, and hence is non-elementary hyperbolic.

 Moreover, by \cite[Corollary 6.34, Theorem 5.15]{DGO11}, the normal closure $N=\langle \cup_\iota N_\iota\rangle^\G$ is either trivial or it is isomorphic to a (possibly infinite) free product, $N =\ast_{\iota\in I} \ast_{\gamma\in T_{\iota}} N_\iota^\gamma$, where $T_{\iota}\subset \Gamma$ are some subsets and $N^{\gamma} _\iota$ is the conjugate of $N_\iota$ by $\gamma$.  Since  all the peripheral subgroups  $H_\iota$'s are almost malnormal in $\Gamma$ by \cite[Theorem 1.7]{Os04},  then $N$ is a non-trivial free product.  
 
 Letting $\delta:\G\ra K $ to be the canonical quotient map, where $N=ker(\delta)$, we get the desired conclusion.  \end{proof}

\begin{Remark}
If the parabolic groups $H_\iota$ are amenable then so are all the conjugates $N^{\gamma_\iota}_\iota$'s and we  conclude that the group  $N$ is weakly amenable, with Cowling-Haagerup constant one. Moreover, since $N$ is a free product of amenable groups,  it admits a proper $1$-cocycle into the  left regular representation.  Altogether, using Theorem \ref{pv}  and \cite[Theorem 1.4]{pv2} we have that  $N , \, \G /N\in \mathcal C_{rss}$, and hence $\G\in Quot_2(\mathcal C_{rss})$. Thus, by Theorem \ref{cartan} above it already follows that such groups $\Gamma$ are $\mathcal C$-rigid. 
However, Theorem \ref{C} asserts that the same holds without the amenability assumption on the parabolic groups. 
\end{Remark}

In order to treat the case when the parabolic groups $H_\iota$ are not necessarily amenable, we will use the following consequence of some recent structural results for normalizers inside amalgamated free product von Neumann algebras:

\begin{theorem}[Ioana, \cite{Io12a}; Vaes, \cite{Va13}]\label{freeprodcontrol}Let $L_1,L_2$ be two groups with $|L_1|\geqslant 2$ and $|L_2|\geqslant 3$. Let $L_1\star L_2 \ca (X,\mu)$ be a pmp action and denote  $M = L^\infty(X)\rtimes (L_1 \star L_2)$.  Let $p\in M$ be a nonzero projection and $A \subset pMp$ be an amenable von Neumann subalgebra.

If the normalizer $Q:=\mathcal N_{pMp}(A)''\subset pMp$ has finite index, then  $A \prec^s_M L^\infty(X)$. 

\end{theorem}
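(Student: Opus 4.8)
The plan is to run the argument through the amalgamated free product structure of $M$ together with the normalizer dichotomy for such algebras proved by Ioana \cite{Io12a} and Vaes \cite{Va13}. The starting observation is that a crossed product by a free product is an amalgamated free product over the base: setting $B=L^{\infty}(X)$ and $M_j=L^{\infty}(X)\rtimes L_j$ for $j=1,2$, the action of $L_1\star L_2$ yields the identification
\begin{equation*}
M=L^{\infty}(X)\rtimes(L_1\star L_2)=M_1\ast_B M_2 .
\end{equation*}
Since $|L_1|\geqslant 2$ and $|L_2|\geqslant 3$, the factors $M_1,M_2$ properly contain $B$, and the acting group $L_1\star L_2$ is non-amenable (it contains a copy of $\mathbb F_2$). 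These are precisely the features that will let me rule out the non-favorable alternatives in the dichotomy.

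First I would reduce the stable intertwining to an ordinary one. Suppose, for contradiction, that $A\nprec^s_M B$. Then, by \cite[Lemma 2.5 and Proposition 2.6]{Va10b}, there is a non-zero projection $p_2\in\mathcal Z(Q)$ with $Ap_2\nprec_M B$. Replacing $p$ by $p_2$, the algebra $Ap_2\subset p_2Mp_2$ is still amenable, and its normalizer contains $Qp_2$, which has finite index in $p_2Mp_2$ because $p_2\in\mathcal Z(Q)$. Hence it suffices to show that the three conditions \emph{$A$ amenable}, \emph{$Q:=\mathcal N_{pMp}(A)''\subset pMp$ of finite index}, and \emph{$A\nprec_M B$} are jointly contradictory.

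The key step is to invoke the amalgamated free product normalizer dichotomy of \cite{Io12a,Va13}: since $A$ is amenable, hence amenable relative to $B$, at least one of the following holds: (i) $A\prec_M B$; (ii) $Q\prec_M M_i$ for some $i\in\{1,2\}$; or (iii) $Q$ is amenable relative to $B$ inside $M$. Alternative (i) is excluded by the standing assumption $A\nprec_M B$. For (iii): since $Q\subset pMp$ has finite index, Lemma \ref{ramen} gives $pMp\prec^s_{pMp}Q$, so $pMp$ is amenable relative to $Q$; combining this with (iii) and the transitivity of relative amenability \cite[Proposition 2.4(3)]{OP07}, one gets that $pMp$, and hence $Mz$ for $z$ the central support of $p$, is amenable relative to $B$. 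Arguing exactly as in the case $n=1$ of the proof of Theorem \ref{Cartan}, this produces a $\{\lambda_\g\}$-invariant state on $\mathbb B(\ell^2(L_1\star L_2))$, forcing $L_1\star L_2$ to be amenable, a contradiction. For (ii): finite index of $Q$ gives $pMp\prec_M M_i$, and since a corner $eMe$ of $pMp$ with $e\leqslant p$ is also a corner of $M$, this yields $M\prec_M M_i$; but in $M=M_1\ast_B M_2$ with both factors properly containing $B$ one has $M\nprec_M M_i$ (test condition (2) of Theorem \ref{corner} on group elements running to infinity transversally to $L_i$, which exist because $|L_1|,|L_2|\geqslant 2$), again a contradiction.

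Thus all three alternatives of the dichotomy are impossible, which is absurd; the three standing assumptions are therefore contradictory. Applied to the corner $p_2$ this forces $p_2=0$, and we conclude $A\prec^s_M B=L^{\infty}(X)$. The genuinely hard input is the amalgamated free product normalizer dichotomy, which I would import as a black box from \cite{Io12a,Va13}; once that is in hand, the remaining work is purely the finite-index bookkeeping, namely checking that neither $Q\prec_M M_i$ nor relative amenability of $Q$ can coexist with a finite-index normalizer. I expect the only delicate point beyond the cited dichotomy to be the clean statement $M\nprec_M M_i$, which however is standard for amalgamated free products with proper factors.
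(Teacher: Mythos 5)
Your proposal is correct and follows essentially the same route as the paper's proof: the identification $M=(L^\infty(X)\rtimes L_1)\ast_{L^\infty(X)}(L^\infty(X)\rtimes L_2)$, the normalizer dichotomy of \cite[Theorem A]{Va13}, exclusion of the relative-amenability branch via Lemma \ref{ramen} and \cite[Proposition 2.4 (3)]{OP07}, and exclusion of $Q\prec_M L^\infty(X)\rtimes L_i$ by upgrading it to an intertwining of all of $M$. The one point you gloss over is the transitivity step from $pMp\prec^s_{M}Q$ and $Q\prec_M L^\infty(X)\rtimes L_i$ to $pMp\prec_M L^\infty(X)\rtimes L_i$, which the paper justifies by first cutting with a central projection $q_1\in\mathcal Z(Q)$ so that $Qq_1\prec^s_M L^\infty(X)\rtimes L_i$ and then invoking \cite[Remark 3.7]{Va08} --- the same central-projection bookkeeping you already carry out for the reduction to $A\nprec_M L^\infty(X)$ at the outset.
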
 

Note that this result can be derived by adapting the proof \cite[Theorem 7.1]{Io12a}. However, for simplicity, we will give a more direct proof of Theorem \ref{freeprodcontrol}  using \cite{Va13}.

\begin{proof}  By applying \cite[Theorem A]{Va13} we get that one of the following statements holds:
\begin{enumerate}
\item  $Q$ is amenable relative to $L^\infty(X)$ inside $M$;
\item   $Q \prec_M L^\infty(X)\rtimes L_i$, for some $i\in\{1,2\}$;
\item $A \prec_M L^\infty(X)$.
\end{enumerate} 
Now, since $Q$ has finite index in $pMp$, then by Lemma \ref{ramen} we have that $pMp \prec^s_M Q$ and hence $pMp$ is amenable relative to $Q$ inside $M$. Thus, if  (1) holds,  then by Proposition 2.4 (3) in \cite{OP07} we would get that $pMp$ is amenable relative to $L^\infty(X)$ inside $M$. This would imply that $L_1\star L_2$ is amenable, which is a contradiction. 

Next, assume that (2) holds. Thus, we have that $Q \prec_M L^\infty(X)\rtimes L_i$, for some $i\in\{1,2\}$. By proceeding as in the beginning of Theorem \ref{Cartan} we can find projections $q_1, q_2\in \mathcal Z(Q)$ with $q_1+q_2=p$ and $q_1\neq 0$ such that  $Qq_1 \prec^s_M L^\infty(X)\rtimes L_i$ and $Qq_2 \nprec_M L^\infty(X)\rtimes L_i$. Since $Q \subseteq pMp$  has finite index it follows that $Qq_1\subseteq q_1Mq_1$ has finite index and hence by Lemma \ref{ramen} we get $q_1Mq_1  \prec^s_M Qq_1$. This fact in combination with the above and  Remark 3.7 in \cite{Va08} gives that $q_1Mq_1  \prec_M L^\infty(X)\rtimes L_i$. This further implies that $L_i$ has finite index in $L_1\star L_2$, which is again a contradiction. 

Thus, the only possibility is that $A \prec_M L^\infty(X)$. Finally, by arguing similarly to the beginning of Theorem \ref{Cartan}, we conclude that $A \prec^s_M L^\infty(X)$. 
\end{proof}

We are now ready to prove Theorem \ref{C}.

\subsection{Proof of Theorem \ref{C}}
 By Corollary \ref{dgo-homo}  there exists a surjective  homomorphism  $\delta:\G\ra \Lambda$  onto a  non-elementary hyperbolic group $\Lambda$ such that $ker(\delta)=L_1\star L_2$ is the free product of two infinite groups.  Let $\theta:M\rightarrow M\bar{\otimes}L(\Lambda)$ be the $*$-homomorphism associated to $\delta$ as defined in Notation \ref{comu}. 
We identify $M\bar{\otimes}L(\Lambda)=M\rtimes\Lambda$, where $\Lambda$ acts trivially on $M$. Since $\Lambda\in\mathcal C_{rss}$ by \cite{pv2} and $\theta(M)\subset\mathcal N_{(M\bar{\otimes}L(\Lambda))}(\theta(A))''$, we have that either $\theta(A)\prec_{M\bar{\otimes}L(\Lambda)}M\otimes 1$, or $\theta(M)$ is amenable relative to $M\otimes 1$ inside $M\bar{\otimes}L(\Lambda)$. 

By Proposition \ref{prop2}  the second case implies that $\delta(\G)=\Lambda$ is amenable, which is a contradiction. So the first case must hold , i.e., $\theta(A)\prec_{M\bar{\otimes}L(\Lambda)}M\otimes 1$. Then Proposition \ref{prop1} implies that 
$A\prec_{M}P:=L^\infty(X)\rtimes ker(\delta)$. By Proposition \ref{masa} we can find a non-zero projection $q\in P$ and a masa  $A_0\subset qPq$ such that 
$P_0:=\mathcal N_{qPq}(A_0)''\subset qPq$ has finite index. Moreover, we can find non-zero projections $p_0\in A$, $q'\in A_0'\cap qMq$, and a unitary $u\in M$ such that $u(Ap_0)u^*=A_0q'$. 

Now, since $ker(\delta)=L_1\star L_2$, we can decompose $P=(L^\infty(X)\rtimes L_1)\star_{L^\infty(X)}(L^\infty (X)\rtimes L_2)$ as an amalgamated free product. Thus, since $A_0\subset qPq$ is amenable and $P_0\subseteq qPq$ has finite index, by applying Theorem \ref{freeprodcontrol} we derive that $A_0\prec^s_{P} L^\infty(X)$. Moreover, since $u(Ap)u^*=A_0q'$, this further implies that $A\prec_M L^\infty(X)$. Finally,  by using  \cite[Theorem A.1]{Po01}, we can find a unitary $u\in M$ such that $A=uL^\infty (X)u^*$.  \hfill$\square$
\vskip 0.05in
\begin{Remark}
Theorem \ref{C} applies to many concrete families of groups $\Gamma$ that have been intensively studied over the last decades in various branches of topology, geometric group theory, or model theory -- for instance, whenever $\Gamma$ is in any of the following classes: 

\begin{enumerate}\item The fundamental group of any complete, finite-volume Riemannian manifold with pinched negative sectional curvature; these groups are hyperbolic relatively to their cusps groups which are finitely generated and nilpotent, and hence residually finite \cite{Bow99,Fa99}.

\item Any limit group, in the sense of Sela, over torsion free, relative hyperbolic groups with free abelian parabolics \cite{KM99,Se00}. Such groups are known to be hyperbolic relatively to a collection of free abelian subgroups from the work of D. Groves, \cite{Gr05}.
\end{enumerate}
\end{Remark}

By using Theorem \ref{C}  in combination with 
 \cite[Theorem A]{Io08} we obtain a new $W^*$-superrigidity result, in the spirit of similar results from \cite{CS11,CSU11}.  

\begin{cor}Let $\Gamma$ be a  residually finite, property (T) group which is hyperbolic relative to a  finite family of proper,  finitely generated,  infinite subgroups. Let $\Gamma\curvearrowright (X,\mu)$ be a free ergodic pmp profinite action and $\Lambda\curvearrowright (Y,\nu)$ be an arbitrary free ergodic pmp action.

If $L^{\infty}(X)\rtimes\Gamma\cong L^{\infty}(Y)\rtimes\Lambda$, then  the actions $\Gamma\curvearrowright X$, $\Lambda\curvearrowright Y$ are stably conjugate. 
\end{cor}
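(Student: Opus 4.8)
The plan is to combine the uniqueness of Cartan subalgebras provided by Theorem \ref{C} with the orbit equivalence superrigidity theorem for profinite actions of property (T) groups from \cite[Theorem A]{Io08}.

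First I would verify that Theorem \ref{C} applies to $\Gamma$. Since $\Gamma$ is residually finite, every subgroup of $\Gamma$ is residually finite as well; in particular, each peripheral subgroup in the relatively hyperbolic structure of $\Gamma$ is a proper, finitely generated, residually finite, infinite group. Hence $\Gamma$ satisfies the hypotheses of Theorem \ref{C}, and consequently the II$_1$ factor $M = L^{\infty}(X) \rtimes \Gamma$ has $L^{\infty}(X)$ as its unique Cartan subalgebra, up to unitary conjugacy. Uniqueness of the Cartan subalgebra is an isomorphism invariant, so the same conclusion holds for $N := L^{\infty}(Y) \rtimes \Lambda$.

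Next I would pass from the von Neumann algebraic isomorphism to an orbit equivalence. Let $\pi : M \to N$ be a $\ast$-isomorphism; it is automatically trace preserving, since II$_1$ factors carry a unique trace. Then $\pi(L^{\infty}(X))$ and $L^{\infty}(Y)$ are both Cartan subalgebras of $N$, so by the uniqueness statement above there is a unitary $w \in \mathcal U(N)$ with $w\,\pi(L^{\infty}(X))\,w^* = L^{\infty}(Y)$. Replacing $\pi$ by $\mathrm{Ad}(w) \circ \pi$, I obtain an isomorphism of the Cartan inclusions $\big(M, L^{\infty}(X)\big) \cong \big(N, L^{\infty}(Y)\big)$. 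By the standard correspondence between Cartan inclusions and orbit equivalence relations (Feldman--Moore, Singer), and using that both actions are free and ergodic, this shows that the actions $\Gamma \curvearrowright (X,\mu)$ and $\Lambda \curvearrowright (Y,\nu)$ are orbit equivalent.

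Finally I would invoke \cite[Theorem A]{Io08}: since $\Gamma$ has property (T) and the action $\Gamma \curvearrowright (X,\mu)$ is free, ergodic, and profinite, it is virtually OE superrigid, meaning that any free ergodic pmp action orbit equivalent to it is stably (equivalently, virtually) conjugate to it. Applying this to $\Lambda \curvearrowright (Y,\nu)$, which has just been shown to be orbit equivalent to $\Gamma \curvearrowright (X,\mu)$, yields that the two actions are stably conjugate, as claimed. The genuinely substantial inputs are Theorem \ref{C} and the profinite cocycle superrigidity of \cite{Io08}; the remaining steps --- residual finiteness of the peripheral subgroups and the passage from unitary conjugacy of Cartan subalgebras to orbit equivalence --- are routine, the main point to be careful about being that the isomorphism of factors may be taken trace preserving, which is automatic here.
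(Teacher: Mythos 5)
Your proposal is correct and follows exactly the route the paper intends: the paper derives this corollary precisely by combining Theorem \ref{C} (whose hypotheses hold since subgroups of residually finite groups are residually finite) with the OE superrigidity of profinite actions of property (T) groups from \cite[Theorem A]{Io08}. The intermediate steps you spell out --- unique trace, unitary conjugacy of the two Cartan subalgebras, and the Feldman--Moore/Singer passage from an isomorphism of Cartan inclusions to an orbit equivalence --- are the standard details the paper leaves implicit.
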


\end{document}